\font\smallit=cmti10
\definecolor{lgray}{rgb}{0.8,0.8,0.8}
\renewcommand\section{\@startsection {section}{1}{\z@}
{-30pt \@plus -1ex \@minus -.2ex}
{2.3ex \@plus.2ex}
{\normalfont\normalsize\bfseries}}
\renewcommand\subsection{\@startsection{subsection}{2}{\z@}
{-3.25ex\@plus -1ex \@minus -.2ex}
{1.5ex \@plus .2ex}
{\normalfont\normalsize\bfseries}}
\renewcommand{\@seccntformat}[1]{\csname the#1\endcsname. }
\newtheorem{theorem}{Theorem}
\newtheorem{lemma}{Lemma}
\newtheorem{conjecture}{Conjecture}
\newtheorem{corollary}{Corollary}
\newtheorem{observation}{Observation}
\newtheorem{definition}{Definition}
\newcommand{\rescaled}[1]{\partial{#1}}
\newcommand{\Rnn}{\mathbb{R}_{\geq 0}}
\newcommand{\ceilz}{\lceil z \rceil}
\begin{document}

\begin{center}
\uppercase{\bf Limit shapes of stable configurations of a generalized Bulgarian solitaire}
\vskip 20pt
{\bf Kimmo Eriksson}\\
{\smallit Mälardalen University, School of Education, Culture and Communication,\\Box 883, SE-72123 Västerås, Sweden}\\
{\tt kimmo.eriksson@mdh.se}\\
\vskip 10pt
{\bf Markus Jonsson\footnote{Corresponding author}}\\
{\smallit Mälardalen University, School of Education, Culture and Communication,\\Box 883, SE-72123 Västerås, Sweden}\\
{\tt markus.jonsson@mdh.se}\\
\vskip 10pt
{\bf Jonas Sjöstrand}\\
{\smallit Royal Institute of Technology, Department of Mathematics,\\SE-10044 Stockholm, Sweden}\\
{\tt jonass@kth.se}\\
\end{center}
\vskip 30pt

%\centerline{\smallit Received: , Revised: , Accepted: , Published: } % We will fill in the dates
\centerline{\smallit \today}

\vskip 30pt

\centerline{\bf Abstract}

\noindent
%Put your abstract here. Please limit it to half of a page of text.
Bulgarian solitaire is played on $n$ cards divided into several piles; a move consists of picking one card from each pile to form a new pile. In a recent generalization, $\sigma$-Bulgarian solitaire,  the number of cards you pick from a pile is some function $\sigma$ of the pile size, such that you pick $\sigma(h)\le h$ cards from a pile of size $h$. Here we consider a special class of such functions. Let us call $\sigma$ well-behaved if $\sigma(1)=1$ and if both $\sigma(h)$ and $h-\sigma(h)$ are non-decreasing functions of $h$. Well-behaved  $\sigma$-Bulgarian solitaire has a geometric interpretation in terms of layers at certain levels being picked in each move. It also satisfies that if a stable configuration of $n$ cards exists it is unique. Moreover, if piles are sorted in order of decreasing size ($\lambda_1 \ge \lambda_2\ge \dots$) then a configuration is convex if and only if it is a stable configuration of some well-behaved  $\sigma$-Bulgarian solitaire. If sorted configurations are represented by Young diagrams and scaled down to have unit height and unit area, the stable configurations corresponding to an infinite sequence of well-behaved functions ($\sigma_1, \sigma_2, \dots$) may tend to a limit shape $\phi$. We show that every convex $\phi$ with certain properties
%unit height and unit area
can arise as the limit shape of some sequence of well-behaved $\sigma_n$. For the special case when $\sigma_n(h)=\lceil q_n h \rceil$ for $0 < q_n \le 1$, these limit shapes are triangular (in case $q_n^2 n\rightarrow 0$), or exponential (in case $q_n^2 n\rightarrow \infty$), or interpolating between these shapes (in case $q_n^2 n\rightarrow C>0$).

\pagestyle{myheadings}
%\markright{\smalltt INTEGERS: 15 (2015)\hfill}
\thispagestyle{empty}
\baselineskip=12.875pt
\vskip 30pt

\section{Introduction}
\label{sec:bulgsol}
The game of Bulgarian solitaire is played with a deck of $n$ identical cards divided arbitrarily into several piles. A move consists of picking a card from each pile and letting these cards form a new pile. This move is repeated over and over again. For  information about the earlier history of the Bulgarian solitaire and a summary of subsequent research, see reviews by Hopkins \cite{Hopkins2011} and Drensky \cite{drensky2015bulgarian}. 

Let $\mathcal{P}(n)$ denote the set of integer partitions of $n$. An integer partition of $n$ is a $\lambda = (\lambda_1,\lambda_2,\dotsc,\lambda_\ell)$ such that $\lambda_1\geq\lambda_2\geq\ldots\geq\lambda_\ell>0$ and $\sum_{i=1}^\ell \lambda_i=n$. For $i>\ell(\lambda)$ it will be convenient to define $\lambda_i=0$. The number of non-zero parts of the partition $\lambda$ is denoted by $\ell=\ell(\lambda)$. If piles of cards are sorted in order of decreasing size, any configuration of $n$ cards can be regarded as an integer partition of $n$. A geometric shape arises when a configuration $\lambda$ is represented by a Young diagram of unit squares in the first quadrant of a coordinate system for the real plane, such that the $i$th column has height $\lambda_i$.  A move of the Bulgarian solitaire then has the geometric interpretation of picking the first (i.e., bottom) layer of the diagram and making it the new first column, left-shifting cards if needed so that the configuration remains sorted.

\ytableausetup{mathmode, boxsize=13pt, aligntableaux=center}
\begin{figure}[ht]
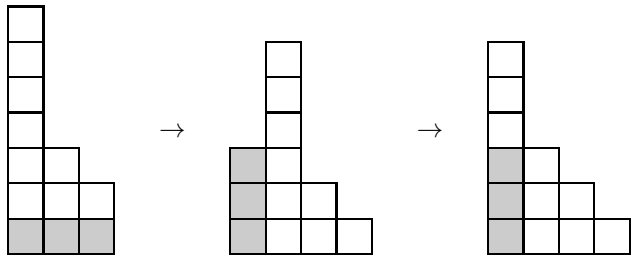

	\centering
	\begin{ytableau}
		\none & \\
		\none & \\
		\none & \\
		\none & \\	
		\none & & \\
		\none & & & \\
		\none & *(lgray) & *(lgray) & *(lgray) & \none
	\end{ytableau}
	$\to$
	\begin{ytableau}
		\none \\
		\none & \none & \\
		\none & \none & \\
		\none & \none & \\
		\none & *(lgray) & \\
		\none & *(lgray) & & \\
		\none & *(lgray) & & & & \none
	\end{ytableau}
	$\to$
	\begin{ytableau}
		\none \\
		\none & \\	
		\none & \\
		\none & \\
		\none & *(lgray) & \\
		\none & *(lgray) & & \\
		\none & *(lgray) & & &
	\end{ytableau}
\caption{A move in Bulgarian solitaire from  $\lambda=(7,3,2, 0, 0, \dots)\in\mathcal{P}(12)$: The bottom layer is picked to form a new pile with three cards and the cards are then left-shifted.
\label{fig:ordinary_move}}
\end{figure}

%\begin{figure}[ht]
%\setlength{\unitlength}{0.08cm}
%\centering
%\begin{picture}(70,60)
%\end{picture}
%\caption{A move in Bulgarian solitaire from  $\lambda=(7,3,2, 0, 0, \dots)\in\mathcal{P}(12)$: The bottom layer is picked to form a new pile and cards are then left-shifted. MARKUS! DRAW PICTURE.\label{fig:ordinary_move}}
%\end{figure}

Olson \cite{Olson2016} recently introduced a generalization of Bulgarian solitaire in which the number of cards picked from a pile of size $h$ is given by  $\sigma(h)$, where $\sigma: \mathbb{Z}_{+} \rightarrow \mathbb{N}$ can be any function such that $\sigma(h)\le h$ for all $h\in \mathbb{Z}_{+}$. The ordinary Bulgarian solitaire is obtained for the constant function $\sigma(h)=1$. Olson studied cycle lengths, proving a general upper bound on cycle lengths for any specification of $\sigma$. 

The key results known to hold for ordinary Bulgarian solitaire do not generalize to Olson's $\sigma$-Bulgarian solitaire. For instance, it is well-known that ordinary Bulgarian solitaire has the property that if a stable (i.e., fixpoint) configuration exists it is unique \cite{Brandt}. This uniqueness property of stable configurations does not generally hold for the $\sigma$-Bulgarian solitaire; a simple counter-example is obtained by defining $\sigma(1)=1$, $\sigma(2)=1$, and $\sigma(3)=3$, in which case both (2,1) and (3) are stable configurations of three cards. To avoid such pathological cases we impose some additional conditions on $\sigma$:

\begin{definition}\label{def:well-behaved}
A $\sigma$-Bulgarian solitaire is said to be \emph{well-behaved} if the following three conditions on $\sigma$ are satisfied:
\begin{enumerate}
\item $\sigma(1)=1$,
\item $\sigma(h)$ is a non-decreasing function,
\item $\bar\sigma(h):=h-\sigma(h)$ is a non-decreasing function.
\end{enumerate}
\end{definition}

The first condition in the definition says that from a pile with just a single card, you pick that card. The second condition says that you never pick fewer cards from a larger pile than from a smaller pile. The  third condition says that the number of unpicked cards are never fewer in the larger pile than in a smaller pile. 

The aim of this paper is to show how a number of properties that are well-known to hold for ordinary Bulgarian solitaire generalize to well-behaved $\sigma$-Bulgarian solitaire. A simple example is the dominance property: If a configuration $\lambda$ is dominated by another configuration $\kappa$, in the sense that $\lambda_i \le \kappa_i$ holds for all $i$, then this dominance relation is preserved as the solitaire is played in parallel from the two configurations. Let $\lambda \le \kappa$ denote the dominance relation and let $\lambda^\text{new}$ and $\kappa^\text{new}$ denote the configurations obtained from playing one move of $\sigma$-Bulgarian solitaire from configurations $\lambda$ and $\kappa$, respectively.
 
\begin{theorem}\label{thm:dominance}
The implication  $\lambda \le \kappa \Rightarrow \lambda^\text{new} \le \kappa^\text{new}$ holds in $\sigma$-Bulgarian solitaire if both $\sigma$ and $\bar\sigma$ are non-decreasing functions. In particular, the implication holds for well-behaved $\sigma$.
\end{theorem}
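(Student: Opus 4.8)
The plan is to describe a single move purely at the level of multisets of pile sizes and then to compare the two post-move multisets by matching them pile-by-pile. Adopt the conventions $\lambda_i=0$ for $i>\ell(\lambda)$ and $\sigma(0)=\bar\sigma(0)=0$; the latter is consistent with $\sigma$ and $\bar\sigma$ being non-decreasing because $0\le\sigma(1)$ and $\bar\sigma(0)=0\le 1-\sigma(1)=\bar\sigma(1)$, using the standing assumption $\sigma(1)\le 1$. With these conventions, one move of the $\sigma$-solitaire turns a configuration $\lambda$ into the weakly decreasing rearrangement of the multiset
\[
M(\lambda)\;:=\;\{\bar\sigma(\lambda_1),\bar\sigma(\lambda_2),\dots\}\;\uplus\;\{\,s(\lambda)\,\},
\qquad s(\lambda):=\sum_{i\ge 1}\sigma(\lambda_i),
\]
only finitely many entries of which are nonzero; the positive part of this rearrangement is the partition $\lambda^{\mathrm{new}}$ (zero entries among the $\bar\sigma(\lambda_i)$ are empty piles, which are dropped — this does not affect the sorted sequence).

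The heart of the argument is the following elementary monotonicity of sorting: if $A$ and $B$ are multisets of nonnegative integers with finitely many nonzero elements, and $g\colon A\to B$ is a bijection with $g(a)\ge a$ for every $a$, then the weakly decreasing rearrangements $a_1\ge a_2\ge\cdots$ of $A$ and $b_1\ge b_2\ge\cdots$ of $B$ satisfy $a_i\le b_i$ for all $i$. Indeed, $g(a_1),\dots,g(a_i)$ are $i$ distinct members of $B$, each of which is $\ge a_i$, so $B$ has at least $i$ members that are $\ge a_i$, forcing $b_i\ge a_i$. Now, given $\lambda\le\kappa$, define $g\colon M(\lambda)\to M(\kappa)$ by sending the $i$th leftover $\bar\sigma(\lambda_i)$ to $\bar\sigma(\kappa_i)$ and the new pile $s(\lambda)$ to $s(\kappa)$; this is a bijection of the indexed entries. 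Since $\lambda_i\le\kappa_i$ for every $i$, monotonicity of $\bar\sigma$ gives $\bar\sigma(\lambda_i)\le\bar\sigma(\kappa_i)$, and monotonicity of $\sigma$ gives $\sigma(\lambda_i)\le\sigma(\kappa_i)$, hence $s(\lambda)\le s(\kappa)$ after summing. Thus $g$ dominates entrywise, and the lemma yields $\lambda^{\mathrm{new}}\le\kappa^{\mathrm{new}}$.

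This uses exactly the two hypotheses in the statement — $\sigma$ and $\bar\sigma$ non-decreasing — together with the standing condition $\sigma(h)\le h$ (only through $\sigma(1)\le 1$, to extend $\bar\sigma$ to $0$); and since conditions (2) and (3) of Definition~\ref{def:well-behaved} say precisely that $\sigma$ and $\bar\sigma$ are non-decreasing, the claim for well-behaved $\sigma$ is immediate. I do not anticipate a genuine obstacle here: the only points needing care are the bookkeeping for columns present in $\kappa$ but absent in $\lambda$ (indices $i$ with $\lambda_i=0<\kappa_i$, handled uniformly once $\sigma,\bar\sigma$ are extended to $0$) and the remark that discarding empty piles commutes with sorting. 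One could alternatively first reduce, via a chain of single-box additions, to the case in which $\kappa$ covers $\lambda$ and note that then $\sigma(\lambda_j+1)-\sigma(\lambda_j)\in\{0,1\}$, but the direct pile-by-pile matching above makes that reduction unnecessary.
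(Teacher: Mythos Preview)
Your proof is correct and follows essentially the same line as the paper's: compare the leftover piles via monotonicity of $\bar\sigma$, compare the new pile via monotonicity of $\sigma$, and then observe that sorting preserves the entrywise dominance. The paper simply asserts that last step (``clearly remains when the piles in each configuration are sorted''), whereas you supply the standard counting argument and take extra care with the extension $\sigma(0)=\bar\sigma(0)=0$ to handle indices present in $\kappa$ but not in $\lambda$; these are expository refinements rather than a different approach.
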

\begin{proof}
If $\bar\sigma$ is non-decreasing, what remains of the old piles of $\lambda$ will be dominated by what remains of the old piles of $\kappa$.  If $\sigma$ is non-decreasing, the new pile formed from $\lambda$ will be dominated by the new pile formed from $\kappa$. This pilewise dominance clearly remains when the piles in each configuration are sorted by size. By Definition~\ref{def:well-behaved}, all well-behaved $\sigma$ satisfy that both $\sigma$ and $\bar\sigma$ are non-decreasing functions.
\end{proof}

We now outline the other properties to be examined in this paper. In section~\ref{well-behaved-interpretation} we show that moves of any well-behaved $\sigma$-solitaire have a geometric interpretation in terms of picking certain layers of cards. In section~\ref{sec:stable} we demonstrate that stable configurations of any well-behaved $\sigma$-solitaire are unique for any $n$ for which a stable configuration exists. In section~\ref{sec:convex} we characterize stable configurations of well-behaved $\sigma$-solitaires as convex (i.e., $\lambda_i-\lambda_{i+1} \ge \lambda_{i+1} - \lambda_{i+2}$ for all $i\ge 1$). In sections~\ref{sec:limit-concept} and \ref{sec:stable-limit} we define limit shapes of stable configurations for an infinite sequence of well-behaved $\sigma_n$-solitaires and show that any convex shape can be obtained as a limit shape of such a sequence. 

In section~\ref{sec:conjecture} we define the surplus and deficit of a configuration with respect to a given stable configuration and show that the total surplus and total deficit can decrease but never increase as a well-behaved $\sigma$-solitaire is played. This property suggests that as a well-behaved $\sigma$-solitaire is played the configurations tend to converge toward stable configurations. We conjecture that recurrent configurations are so close to stable configurations that they have the same limit shape. In sections~\ref{sec:q-stable} and \ref{sec:q-recurrent} we prove this conjecture in the special case when the $\sigma_n$-solitaires are given by $\sigma_n(h)=\lceil q_n h\rceil$ for $q_n\in (0,1]$. The limit shapes of stable and recurrent configurations are then triangular in case $q_n^2 n\rightarrow 0$, and exponential in case $q_n^2 n\rightarrow \infty$.

\section{A geometric interpretation of well-behaved $\sigma$-Bulgarian solitaires}\label{well-behaved-interpretation}
The moves of a well-behaved $\sigma$-Bulgarian solitaire has an intuitive geometric interpretation. First note that $\sigma$ is well-behaved if and only if it satisfies the boundary condition $\sigma(1)=1$ and the condition that for all pile sizes $h>0$ the difference $\Delta\sigma(h) := \sigma(h)-\sigma(h-1)$ equals either 1 or 0. We can then record the values of $h$ for which $\Delta\sigma(h)=1$ as a (finite or infinite) sequence $H_1=1, H_2, H_3, \dots$.  For any index $i$ for which $H_i$ is defined it follows follows from the definition that $H_i = \min\{h : \sigma(h)=i\}$. A move of the $\sigma$-Bulgarian solitaire can now be described as a move on the Young diagram of the configuration in which the cards in layers number $H_1=1, H_2, H_3, \dots$ (counted from the bottom) are removed to form a new column; sorting is then achieved by left-shifting the cards in every layer. See Figure~\ref{fig:wellbehaved_move} for an example. Note that ordinary Bulgarian solitaire is the special case in which only row number $H_1=1$ is removed to form a new column.

\begin{figure}[ht]
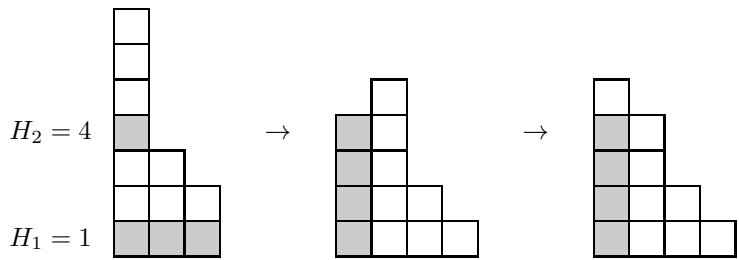
	
	\centering
	\begin{ytableau}
		\none \\
		\none \\
		\none \\
		\none[H_2=4] \\	
		\none \\
		\none \\
		\none[H_1=1]
	\end{ytableau}	
	\begin{ytableau}
		\none & \\
		\none & \\
		\none & \\
		\none & *(lgray) \\	
		\none & & \\
		\none & & & \\
		\none & *(lgray) & *(lgray) & *(lgray) & \none
	\end{ytableau}
	$\to$
	\begin{ytableau}
		\none \\
		\none \\
		\none & \none & \\
		\none & *(lgray) & \\
		\none & *(lgray) & \\
		\none & *(lgray) & & \\
		\none & *(lgray) & & & & \none
	\end{ytableau}
	$\to$
	\begin{ytableau}
		\none \\
		\none \\	
		\none & \\
		\none & *(lgray) & \\
		\none & *(lgray) & \\
		\none & *(lgray) & & \\
		\none & *(lgray) & & &
	\end{ytableau}
	%\begin{figure}[ht]
	%\setlength{\unitlength}{0.08cm}
	%\centering
	%\begin{picture}(70,60)
	%\end{picture}
\caption{A move from the partition $\lambda=(7,3,2, 0, 0, \dots)\in\mathcal{P}(12)$ in a well-behaved $\sigma$-Bulgarian solitaire in which layers number $H_1=1$ and $H_2=4$ are picked to form a new pile with four cards.
	%The new pile ends up as the second column in the resulting diagram after left-shifting.
	%MARKUS! SHOW IN TWO STEPS (1) HOW ROW 1 AND 4 ARE REMOVED TO FORM A NEW PILE (2) FOLLOWED BY SORTING.
\label{fig:wellbehaved_move}}
\end{figure}

\begin{observation}\label{obs:interpretation}
In the geometric interpretation of a well-behaved $\sigma$-Bulgarian solitaire, $\sigma(h)$ equals the number of picked layers up to layer $h$.
\end{observation}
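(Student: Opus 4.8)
The plan is to reduce the statement to a one-line telescoping identity. First adopt the natural convention $\sigma(0)=0$; this is forced by the constraints $\sigma(0)\le 0$ and $\sigma(0)\in\mathbb{N}$ once the domain of $\sigma$ is extended to include $0$, and it is already implicit in the text, since $\Delta\sigma(1)=\sigma(1)-\sigma(0)$ is used. Using the characterization of well-behavedness recalled just above the observation — that $\sigma$ is well-behaved precisely when $\sigma(1)=1$ and $\Delta\sigma(h)\in\{0,1\}$ for every $h>0$ — I would write, for any pile size $h\ge 1$,
\[
\sigma(h)=\sigma(0)+\sum_{h'=1}^{h}\Delta\sigma(h')=\sum_{h'=1}^{h}\Delta\sigma(h')=\bigl|\{\,h'\le h:\Delta\sigma(h')=1\,\}\bigr|,
\]
the last equality because each summand is $0$ or $1$.

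Next I would identify the right-hand side with the number of picked layers. By construction the layers removed in a move are those numbered $H_1=1,H_2,H_3,\dots$, where $H_1,H_2,\dots$ is the increasing enumeration of the levels $h'$ at which $\Delta\sigma(h')=1$ (equivalently $H_i=\min\{h':\sigma(h')=i\}$, as noted in the text). Since $\Delta\sigma\in\{0,1\}$, this enumeration hits each jump level exactly once, so $\{\,h'\le h:\Delta\sigma(h')=1\,\}=\{\,H_i:H_i\le h\,\}$. Therefore the displayed identity reads $\sigma(h)=\bigl|\{\,i:H_i\le h\,\}\bigr|$, which is exactly the number of picked layers lying at or below level $h$ — the asserted claim.

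I do not expect any genuine obstacle: the only points needing a word of care are the boundary convention $\sigma(0)=0$ and the no-repetition remark for the $H_i$. If one wants the writeup self-contained, the preliminary equivalence used above should also be spelled out, and this is the single place where both monotonicity conditions of Definition~\ref{def:well-behaved} enter: $\sigma$ and $\bar\sigma(h)=h-\sigma(h)$ are simultaneously non-decreasing if and only if $\Delta\sigma(h)\ge 0$ and $\Delta\bar\sigma(h)=1-\Delta\sigma(h)\ge 0$, i.e.\ if and only if $\Delta\sigma(h)\in\{0,1\}$ for all $h>0$.
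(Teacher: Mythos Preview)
Your argument is correct and is exactly the reasoning the paper has in mind; indeed, the paper treats this as an immediate consequence of the preceding paragraph (where $H_1,H_2,\dots$ are defined as the levels with $\Delta\sigma=1$ and identified as the picked layers) and gives no separate proof. Your telescoping computation simply spells out that implicit step.
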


In a move of a well-behaved $\sigma$-Bulgarian solitaire, the layers at levels  $H_1=1, H_2, H_3, \dots$ are picked and any other layer, say at level $h$, drops $\sigma(h)$ levels down (with one of the picked card inserted at the left end).  A layer may continue to drop down for several moves but must eventually reach one of the levels  $H_1=1, H_2, H_3, \dots$ and there be picked. Thus, any card currently at level $h$ will eventually be picked at some level $P(h)$ that can be calculated recursively by 
\[
P(h) =
\begin{cases}
      h & \text{ if } h\in \{H_1=1, H_2, H_3, \ldots \}, \\
      P(h-\sigma(h)) & \text{ otherwise}.
\end{cases}
\]
The following observation then follows immediately by induction.

\begin{observation}\label{obs:induction}
In the geometric interpretation of a well-behaved $\sigma$-Bulgarian solitaire, it holds for any level $h$ that the set
\[
\{P(h), P(h-1), P(h-2), \dots, P(h-\sigma(h)+1)\}
\]
is some permutation of the set $\{H_1=1, H_2,\dots, H_{\sigma(h)}\}$.
\end{observation}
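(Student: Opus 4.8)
The plan is to argue by strong induction on the level $h$, using the recursion that defines $P$ together with Observation~\ref{obs:interpretation}; concretely, I will use that $\Delta\sigma(h)\in\{0,1\}$, that $\Delta\sigma(h)=1$ exactly when $h$ is a picking level $H_i$, and hence that $\sigma$ is constant on each block $\{H_{j-1},\dots,H_j-1\}$ with $\sigma(H_j)=j$.

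For the base case $h=1$, condition~1 of Definition~\ref{def:well-behaved} gives $\sigma(1)=1$ and $1=H_1$, so $P(1)=1=H_1$ and the displayed set is $\{H_1\}$, as required. For the inductive step, assume the statement holds at every level smaller than $h$ and split into two cases. If $h$ is a picking level, say $h=H_j$, then $\sigma(h)=j$ and $P(h)=h=H_j$, while $\sigma(h-1)=j-1$ since $\sigma$ is constant on $\{H_{j-1},\dots,H_j-1\}$ (and $h-1\ge H_{j-1}\ge 1$, the remaining subcase $h=H_1=1$ being the base case). Applying the induction hypothesis at level $h-1$ shows that $P(h-1),\dots,P(h-j+1)$ is a rearrangement of $H_1,\dots,H_{j-1}$, and adjoining the new value $P(h)=H_j$ gives the claim at level $h$. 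If $h$ is not a picking level, then $\sigma(h)<h$ (otherwise every layer up to $h$ would be picked, forcing $h$ to be a picking level), so $h-\sigma(h)\ge 1$ and the recursion gives $P(h)=P(h-\sigma(h))$; moreover $\sigma(h-1)=\sigma(h)=:s$ because $\Delta\sigma(h)=0$, and $h\ge 2$ here. Substituting $P(h-s)$ for $P(h)$ turns $\{P(h),P(h-1),\dots,P(h-s+1)\}$ into $\{P(h-1),P(h-2),\dots,P(h-s)\}$, which is precisely the set governed by the induction hypothesis at level $h-1$, hence a rearrangement of $H_1,\dots,H_s=H_1,\dots,H_{\sigma(h)}$.

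I expect no genuine obstacle here: the entire content is bookkeeping. The two points that need care are verifying the index ranges ($h-\sigma(h)\ge 1$ and $h-1\ge 1$ in the non-picking case, which is why $h=1$ is peeled off as a base case) and correctly reading off $\sigma(h-1)$ from $\sigma(h)$ via $\Delta\sigma(h)$; everything else is the straightforward unwinding of the recursion already sketched in the paragraph preceding the statement.
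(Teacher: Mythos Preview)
Your proof is correct and is precisely the induction the paper has in mind; the paper's own proof consists only of the phrase ``follows immediately by induction,'' and your two-case argument (splitting on whether $h$ is a picking level and using $\Delta\sigma(h)\in\{0,1\}$) is exactly how that induction is meant to be unpacked.
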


\section{Uniqueness of stable configurations}\label{sec:stable}
Recall that a configuration is said to be stable with respect to $\sigma$-Bulgarian solitaire if a move in the solitaire leaves the configuration invariant. 

\begin{lemma}\label{lm:stable}
On the condition that $\bar\sigma$ is a non-decreasing function, $\lambda$ is a stable configuration with respect to $\sigma$-Bulgarian solitaire if and only if $\lambda_{i+1}= \bar\sigma(\lambda_i)$ for all $i\ge 1$.
\end{lemma}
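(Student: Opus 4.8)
The plan is to analyze a single move of the $\sigma$-Bulgarian solitaire directly, using the geometric/arithmetic description of what happens to each pile, and then check the fixpoint condition pile by pile. Recall that in one move, a pile of size $h$ is reduced to $h-\sigma(h)=\bar\sigma(h)$ cards (these are the cards left behind), and the $\sigma(h)$ cards picked from it contribute to the newly formed pile. So if $\lambda=(\lambda_1,\lambda_2,\dots)$ is the sorted configuration, the multiset of pile sizes after the move (before re-sorting) is $\{\bar\sigma(\lambda_1),\bar\sigma(\lambda_2),\dots\}$ together with one new pile of size $m:=\#\{i:\lambda_i\geq 1\}=\ell(\lambda)$, the number of nonempty piles. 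Since $\bar\sigma$ is non-decreasing and $\lambda_1\ge\lambda_2\ge\cdots$, the values $\bar\sigma(\lambda_1)\ge\bar\sigma(\lambda_2)\ge\cdots$ are already sorted; the only subtlety is where the new pile of size $m$ slots in when we re-sort.

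For the ``if'' direction, suppose $\lambda_{i+1}=\bar\sigma(\lambda_i)$ for all $i\ge 1$. Then the residual piles are $\bar\sigma(\lambda_i)=\lambda_{i+1}$ for $i\ge 1$, i.e.\ exactly the sequence $(\lambda_2,\lambda_3,\dots)$; prepending the new pile, I need to check that the new pile has size $\lambda_1$ and that it sorts into the first position. Its size is $m=\ell(\lambda)$, so I must show $\ell(\lambda)=\lambda_1$. This follows from the relation: reading $\lambda_{i+1}=\bar\sigma(\lambda_i)$ and using $\bar\sigma(h)\le h$ with $\bar\sigma(1)=0$ (from $\sigma(1)=1$, which holds for well-behaved $\sigma$ — though note the lemma as stated only assumes $\bar\sigma$ non-decreasing, so I should be careful and perhaps only need $\bar\sigma(h)<h$ whenever $h>0$, which must be argued or is implicit), the piles strictly decrease until they hit $0$, and a counting argument (or an induction on the largest part) identifies $\ell(\lambda)$ with $\lambda_1$. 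Actually the cleanest route: the condition $\lambda_{i+1}=\bar\sigma(\lambda_i)$ together with $\lambda$ being a genuine partition forces a ``staircase'' structure from which $\ell(\lambda)=\lambda_1$ drops out, and then $\lambda^{\text{new}}=(\lambda_1,\lambda_2,\dots)=\lambda$.

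For the ``only if'' direction, suppose $\lambda$ is stable. The residual multiset $\{\bar\sigma(\lambda_1)\ge\bar\sigma(\lambda_2)\ge\cdots\}$ plus a single new part of size $m=\ell(\lambda)$, once sorted, must equal $(\lambda_1,\lambda_2,\dots)$. Comparing the two sorted lists termwise and using that inserting one element into a sorted list shifts at most a contiguous block by one position, I get that either the new pile is the largest (giving $\lambda_1=\ell(\lambda)$ and $\lambda_{i+1}=\bar\sigma(\lambda_i)$ for all $i$, as desired) or it lands somewhere in the middle; the goal is to rule out the latter. Here is where monotonicity of $\bar\sigma$ does the real work: if the new pile of size $\ell(\lambda)$ slots into position $j+1$, then $\lambda_1=\bar\sigma(\lambda_1)$, which (combined with $\bar\sigma(h)\le h$ and non-emptiness) forces $\lambda_1$ to equal some value fixed by $\bar\sigma$, and then tracking the identity $\lambda_i=\bar\sigma(\lambda_i)$ for $i\le j$ against $\bar\sigma$ non-decreasing collapses everything, contradicting that $\lambda$ has finitely many nonzero parts unless $j=0$.

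The main obstacle I anticipate is the bookkeeping in the ``only if'' direction: precisely controlling how the single inserted part $\ell(\lambda)$ interleaves with the already-sorted residual sequence, and converting ``$\lambda$ has finitely many parts'' plus ``$\bar\sigma$ non-decreasing'' into the contradiction that pins the new pile to the front. A clean way to organize this is to compare partial sums: $\sum_{i\le k}\lambda_i$ versus $\ell(\lambda)+\sum_{i<k}\bar\sigma(\lambda_i)$, exploiting that total cards are conserved, which should force the termwise identity without delicate case analysis on the insertion position.
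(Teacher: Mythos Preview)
Your proposal contains a fundamental error in the description of the move: you write that the new pile has size $m=\#\{i:\lambda_i\ge 1\}=\ell(\lambda)$, the number of nonempty piles. That is the rule for \emph{ordinary} Bulgarian solitaire, where $\sigma\equiv 1$. In $\sigma$-Bulgarian solitaire the new pile has size $\sum_i\sigma(\lambda_i)$, which need not equal $\ell(\lambda)$. For a concrete counterexample, take the well-behaved $\sigma$ with picking layers $H_1=1$, $H_2=4$ (so $\sigma(1)=\sigma(2)=\sigma(3)=1$, $\sigma(4)=\sigma(5)=\sigma(6)=2$, \dots). The configuration $\lambda=(5,3,2,1)$ is stable: the residuals are $\bar\sigma(5)=3$, $\bar\sigma(3)=2$, $\bar\sigma(2)=1$, $\bar\sigma(1)=0$, and the new pile has size $\sigma(5)+\sigma(3)+\sigma(2)+\sigma(1)=2+1+1+1=5=\lambda_1$. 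But $\ell(\lambda)=4\neq 5$, so your claimed identity $\ell(\lambda)=\lambda_1$ is false, and the entire ``if'' direction collapses. The ``only if'' direction likewise rests on inserting a part of size $\ell(\lambda)$ and so fails for the same reason.

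The paper's proof sidesteps computing the new pile size altogether: it simply observes that the residual piles $(\bar\sigma(\lambda_1),\bar\sigma(\lambda_2),\dots)$ are already sorted (by monotonicity of $\bar\sigma$), and that the new pile's size is forced by card conservation. If $\bar\sigma(\lambda_i)=\lambda_{i+1}$ for all $i$, the residuals sum to $n-\lambda_1$, so the new pile must have size $\lambda_1$ and $\lambda$ is recovered. Conversely, stability forces the sorted residuals plus one inserted part to reproduce $\lambda$, giving $\bar\sigma(\lambda_i)=\lambda_{i+1}$. To repair your argument, replace every occurrence of ``new pile of size $\ell(\lambda)$'' by ``new pile of size $\sum_i\sigma(\lambda_i)$'' and, more simply, use card conservation to avoid computing that sum directly; then the staircase-counting and the $\sigma(1)=1$ digression become unnecessary.
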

\begin{proof}
A move of the solitaire decreases the size of any pile from $\lambda_i$ to $\bar\sigma(\lambda_i)$ and then creates a new pile such that the sum of all pile sizes stays constant at $n$, the total number of cards. Because $\bar\sigma$ is assumed to be a non-decreasing function, the decreased piles will still satisfy $\bar\sigma(\lambda_i)\ge\bar\sigma(\lambda_{i+1})$ for all $i\ge 1$, that is, they will not need to be reordered. Therefore $\lambda$ is a stable configuration if and only if $\bar\sigma(\lambda_i) = \lambda_{i+1}$ for all $i\ge 1$, as the new pile will then necessarily have size $\lambda_1$.
\end{proof}

\begin{theorem}\label{thm:q-unique}
If $\bar\sigma$ is a non-decreasing function, there is at most one stable configuration with respect to $\sigma$-Bulgarian solitaire for a given number $n$ of cards.
\end{theorem}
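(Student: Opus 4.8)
The plan is to use Lemma~\ref{lm:stable} to show that a stable configuration is completely determined by its largest part $\lambda_1$, and then to argue that the map sending $\lambda_1$ to the total number of cards $n$ is strictly increasing, so that at most one value of $\lambda_1$ (hence at most one stable configuration) can correspond to a given $n$.

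First I would observe that, by Lemma~\ref{lm:stable}, any stable configuration $\lambda$ satisfies the recursion $\lambda_{i+1} = \bar\sigma(\lambda_i)$ for all $i \ge 1$. Since $\bar\sigma$ is a fixed function, this means $\lambda$ is entirely determined by $\lambda_1$: we have $\lambda_i = \bar\sigma^{(i-1)}(\lambda_1)$, the $(i-1)$-fold iterate. So for each candidate value $m = \lambda_1$ there is at most one stable configuration with largest part $m$, and its total size is $N(m) := \sum_{i \ge 1} \bar\sigma^{(i-1)}(m)$ (a finite sum, since $\bar\sigma(h) \le h$ and in fact the iterates must eventually hit $0$ for this to be a genuine partition; one should note that $\bar\sigma(h) < h$ whenever $h \ge 1$, because $\sigma(1) = 1$ forces $\bar\sigma(1) = 0$ and monotonicity of $\sigma$ gives $\sigma(h) \ge 1$ for $h \ge 1$).

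Next I would show $N$ is strictly increasing on the set of $m$ for which a stable configuration exists. The key input is the dominance property: if $m < m'$, consider the two sequences $(\bar\sigma^{(i-1)}(m))_i$ and $(\bar\sigma^{(i-1)}(m'))_i$. Since $\bar\sigma$ is non-decreasing, an easy induction gives $\bar\sigma^{(i-1)}(m) \le \bar\sigma^{(i-1)}(m')$ for every $i$, so the first configuration is dominated by the second; and since the largest parts already differ ($m < m'$), the domination is strict in at least one coordinate, whence $N(m) < N(m')$. Therefore distinct values of $\lambda_1$ give distinct values of $n$, so a given $n$ admits at most one stable configuration.

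The main obstacle is a bookkeeping subtlety rather than a deep difficulty: one must be careful that the recursion $\lambda_{i+1} = \bar\sigma(\lambda_i)$ really does terminate in zeros (so that $\lambda$ is a bona fide partition with finitely many parts), and that "$\lambda_1 = m$, $\lambda_{i+1} = \bar\sigma(\lambda_i)$" is consistent with $\lambda$ being weakly decreasing — but this last point is exactly what was checked inside the proof of Lemma~\ref{lm:stable} (monotonicity of $\bar\sigma$ preserves the ordering), and termination follows from $\bar\sigma(h) < h$ for $h \ge 1$ as noted above. Once these are in place, the strict monotonicity of $N$ is the heart of the argument and it is essentially immediate from the dominance bound.
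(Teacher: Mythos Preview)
Your proposal is correct and follows essentially the same route as the paper: use Lemma~\ref{lm:stable} to see that a stable configuration is determined by $\lambda_1$, then use monotonicity of $\bar\sigma$ and induction to get coordinatewise dominance between the two candidates, hence strictly different totals.

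One small remark: your termination discussion invokes $\sigma(1)=1$ and monotonicity of $\sigma$, neither of which is assumed in Theorem~\ref{thm:q-unique} (only $\bar\sigma$ non-decreasing is). Fortunately this discussion is unnecessary for the uniqueness argument: you are comparing two \emph{given} stable configurations, which are partitions by definition and hence already have finitely many nonzero parts, so you never need to prove that the iterates of $\bar\sigma$ reach~$0$. Restricting $N$ to ``the set of $m$ for which a stable configuration exists'', as you already do, is exactly the right move and makes the termination paragraph superfluous.
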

\begin{proof}
When $\bar\sigma$ is a non-decreasing function it follows from Lemma~\ref{lm:stable} that each stable configuration is completely determined by the choice of $\lambda_1$, the size of the largest pile. Let $\lambda$ be a stable configuration with $n$ cards and consider another stable configuration $\lambda'$ with $\lambda_1>\lambda'_1$. It follows immediately by induction that $\lambda_i\ge \lambda'_i$ for all $i\ge 1$, and consequently that the total number of cards in these two configurations are different.
\end{proof}

We can also bound the difference in the total number of cards between consecutive stable configurations in a well-behaved $\sigma$-solitaire. 

\begin{corollary}\label{cor:sigma-nextstable}
Assume that $\sigma(1)=1$ and that both $\sigma$ and $\bar\sigma$ are non-decreasing functions and let $\lambda$ and $\lambda'$ be the stable configurations (with respect to $\sigma$-Bulgarian solitaire) determined by first piles of size $\lambda_1$ and $\lambda'_1 = \lambda_1 + 1$, respectively. Then the difference in the total number of cards between $\lambda'$ and $\lambda$ is at most $\ell(\lambda)+1$. 
\end{corollary}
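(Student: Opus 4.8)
The plan is to make both stable configurations explicit through iteration of $\bar\sigma$ and then to control the coordinatewise difference between them. By Lemma~\ref{lm:stable} a stable configuration is determined by its largest part, namely $\lambda_i=\bar\sigma^{\,i-1}(\lambda_1)$ and $\lambda'_i=\bar\sigma^{\,i-1}(\lambda_1+1)$, where I read $\bar\sigma(0)=0$ for the zero tails (consistent, and in fact forced, since $\sigma(1)=1$ makes $\bar\sigma(1)=0$). Because $\sigma$ is non-decreasing with $\sigma(1)=1$, we have $\sigma(h)\ge 1$, so $\bar\sigma(h)\le h-1$ for all $h\ge 1$; thus each iteration strictly decreases the value until it reaches $0$, and both sequences are honest partitions with $\ell(\lambda)\le\lambda_1$.

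Next I would isolate the one inequality that carries the argument, a contraction property of $\bar\sigma$: for integers $a\ge b\ge 1$,
\[
0\le \bar\sigma(a)-\bar\sigma(b)\le a-b ,
\]
where the left inequality is the assumption that $\bar\sigma$ is non-decreasing and the right one is $\sigma(a)\ge\sigma(b)$. Setting $d_i:=\lambda'_i-\lambda_i$, a short induction --- with the case $\lambda_i=0$ handled separately using that both partitions are weakly decreasing and $\bar\sigma(h)\le h$ --- gives $0\le d_{i+1}\le d_i$ for all $i\ge1$, while $d_1=1$. Hence $(d_i)_{i\ge1}$ is a weakly decreasing sequence of integers valued in $\{0,1\}$: it equals $1$ on an initial segment and $0$ thereafter, and the difference in the total number of cards is exactly $\sum_{i\ge1}d_i$, i.e.\ the length of that initial segment.

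It then remains to bound this length by $\ell(\lambda)+1$, and this is the only place where $\sigma(1)=1$ is genuinely used. Write $\ell=\ell(\lambda)$, so $\lambda_{\ell+1}=0$ and therefore $\lambda'_{\ell+1}=d_{\ell+1}\le1$. Consequently $\lambda'_{\ell+2}=\bar\sigma(\lambda'_{\ell+1})$ equals either $\bar\sigma(0)=0$ or $\bar\sigma(1)=1-\sigma(1)=0$; in both cases $\lambda'_{\ell+2}=0$, so $d_i=0$ for every $i\ge\ell+2$. Thus only the indices $1,\dots,\ell+1$ can contribute, and $\sum_{i\ge1}d_i\le\ell+1=\ell(\lambda)+1$, which is the claim.

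The whole argument is short, so I do not expect a real obstacle; the only things requiring care are the bookkeeping at the zero tails of the two partitions and recognizing from the start that the statement reduces to the contraction property of $\bar\sigma$ together with the crude bound $\ell(\lambda')\le\ell(\lambda)+1$.
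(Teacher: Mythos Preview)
Your proof is correct and follows essentially the same route as the paper's: both arguments show that the coordinatewise differences $d_i=\lambda'_i-\lambda_i$ form a weakly decreasing $\{0,1\}$-valued sequence and then bound the number of nonzero terms by $\ell(\lambda)+1$. You phrase the key step as the contraction inequality $0\le\bar\sigma(a)-\bar\sigma(b)\le a-b$, whereas the paper equivalently tracks whether $\sigma(\lambda_i+1)-\sigma(\lambda_i)$ is $0$ or $1$; these are the same observation since $\bar\sigma(h+1)-\bar\sigma(h)=1-(\sigma(h+1)-\sigma(h))$.
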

\begin{proof}
As we noted in the introduction, the assumption that both $\sigma$ and $\bar\sigma$ are non-decreasing functions implies that for any pile size $h$ we have that $\sigma(h+1)-\sigma(h)$ equals either 1 or 0. Starting from the relation $\lambda'_1 = \lambda_1 + 1$, it follows immediately by induction that as long as $\sigma(\lambda_i+1)-\sigma(\lambda_i) = 0$ we will also have $\lambda'_{i+1} = \lambda_{i+1} + 1$. The first time we instead have $\sigma(\lambda_i+1)-\sigma(\lambda_i) = 1$, we will obtain $\lambda'_{i+1} = \lambda_{i+1}$, and from that point on the pile sizes will be identical in the two configurations. Thus, the difference in the total number of cards is equal to the number of piles that differed in size, which is at most the number of piles in the larger configuration $\lambda'$. Because each of its piles is at most one larger than the corresponding piles in the smaller configuration $\lambda$, it can have at most one pile more. Hence, the difference in the total number of cards is bounded by $\ell(\lambda)+1$. 
\end{proof}

\section{Characterization of stable configurations}\label{sec:convex}
We shall now characterize what stable configurations of well-behaved $\sigma$-solitaires look like. Define a configuration $\lambda$ as \emph{convex} if $\lambda_i - \lambda_{i+1} \ge \lambda_{i+1} - \lambda_{i+2}\ge 0$ for all $i\ge 1$.

\begin{lemma}\label{lm:stable-finite-shape}
A configuration $\lambda$ is convex if and only if it is a stable configuration of a well-behaved $\sigma$-Bulgarian solitaire.
\end{lemma}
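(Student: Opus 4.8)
The plan is to prove both directions using the characterization of stable configurations from Lemma~\ref{lm:stable}, namely that $\lambda$ is stable for a well-behaved $\sigma$ if and only if $\lambda_{i+1}=\bar\sigma(\lambda_i)$ for all $i\ge 1$.

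For the ``if'' direction, suppose $\lambda$ is stable for a well-behaved $\sigma$. I would first observe that $\bar\sigma$ non-decreasing and $\sigma$ non-decreasing together force $\bar\sigma$ to be ``slope-$\le 1$'': for any $h$, $\bar\sigma(h)-\bar\sigma(h-1)=1-\Delta\sigma(h)\in\{0,1\}$, as already noted in the excerpt. Now apply this along the chain $\lambda_1\ge\lambda_2\ge\cdots$. Since $\lambda_{i+1}=\bar\sigma(\lambda_i)$ and $\lambda_{i+2}=\bar\sigma(\lambda_{i+1})$, the difference $\lambda_{i+1}-\lambda_{i+2}=\bar\sigma(\lambda_i)-\bar\sigma(\lambda_{i+1})$ counts the number of ``jump levels'' $h$ with $\Delta\sigma(h)=1$ lying in the range $\lambda_{i+1}<h\le\lambda_i$; call this count $c_i$. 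Because $\lambda_i\ge\lambda_{i+1}$, the interval $(\lambda_{i+2},\lambda_{i+1}]$ is weakly to the left of $(\lambda_{i+1},\lambda_i]$, but that alone is not enough — I need that $\bar\sigma$ has nonincreasing increments, which is exactly the statement that $\sigma$ has \emph{nondecreasing} increments, i.e.\ $\sigma$ is convex. This is the crux: a well-behaved $\sigma$ need not be convex in general, so I would instead argue directly that $\lambda_i-\lambda_{i+1}=\sigma(\lambda_i)-\bigl(\lambda_{i+1}-\bar\sigma(\lambda_i)\bigr)$... Let me reconsider and use the cleaner route: $\lambda_i-\lambda_{i+1}=\lambda_i-\bar\sigma(\lambda_i)=\sigma(\lambda_i)$, and likewise $\lambda_{i+1}-\lambda_{i+2}=\sigma(\lambda_{i+1})$. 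Since $\lambda_i\ge\lambda_{i+1}$ and $\sigma$ is non-decreasing, $\sigma(\lambda_i)\ge\sigma(\lambda_{i+1})$, giving $\lambda_i-\lambda_{i+1}\ge\lambda_{i+1}-\lambda_{i+2}$. Non-negativity of the consecutive differences is immediate since $\lambda$ is a partition. Hence $\lambda$ is convex.

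For the ``only if'' direction, suppose $\lambda=(\lambda_1,\dots,\lambda_\ell)$ is convex. I must exhibit a well-behaved $\sigma$ for which $\lambda$ is stable, i.e.\ build $\sigma$ (equivalently $\bar\sigma$) so that $\bar\sigma(\lambda_i)=\lambda_{i+1}$ for all $i$ (with $\lambda_{\ell+1}=0$). The natural construction: let $d_i=\lambda_i-\lambda_{i+1}$, which by convexity satisfies $d_1\ge d_2\ge\cdots\ge d_{\ell-1}\ge d_\ell=\lambda_\ell>0$ (and $d_i=0$ for $i>\ell$ if $\lambda_\ell$... actually $d_\ell=\lambda_\ell\ge 1$). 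Define $\sigma(\lambda_i)=d_i$ for each $i$; on the values $\lambda_i$ this is non-decreasing in $\lambda_i$ precisely because the $d_i$ are non-increasing in $i$ while $\lambda_i$ decreases in $i$. Then interpolate: for $h$ between consecutive partition values $\lambda_{i+1}<h<\lambda_i$, set $\sigma(h)$ to grow in unit steps so that $\sigma$ goes from $d_{i+1}$... wait, one needs $\sigma(\lambda_{i+1})=d_{i+1}$ and $\sigma(\lambda_i)=d_i\ge d_{i+1}$, and the gap in $h$ is $\lambda_i-\lambda_{i+1}=d_i$, which is at least $d_i-d_{i+1}$, so there is room to let $\sigma$ increase by $1$ at $d_i-d_{i+1}$ chosen levels in that range and stay constant elsewhere — this keeps $\Delta\sigma\in\{0,1\}$ and keeps $\bar\sigma$ non-decreasing. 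Below $\lambda_\ell$, force $\sigma(1)=1$ and $\sigma(h)=h$ for $1\le h\le\lambda_\ell$ (so $\bar\sigma=0$ there, matching $\lambda_{\ell+1}=0$); one should double-check $\sigma(\lambda_\ell)=\lambda_\ell=d_\ell$ is consistent, which holds. Above $\lambda_1$, extend $\sigma$ arbitrarily keeping it well-behaved (e.g.\ $\sigma(h)=\sigma(h-1)$). Then $\bar\sigma(\lambda_i)=\lambda_i-d_i=\lambda_{i+1}$ for all $i$, so by Lemma~\ref{lm:stable} $\lambda$ is stable.

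The main obstacle is the bookkeeping in the ``only if'' construction: verifying simultaneously that the interpolated $\sigma$ has all increments in $\{0,1\}$, that $\bar\sigma$ stays non-decreasing across the junctions between the ranges $(\lambda_{i+1},\lambda_i]$, and that the boundary behavior at $h=\lambda_\ell$ and $h=1$ is consistent with $\sigma(1)=1$. The inequality that makes it all fit is $\lambda_i-\lambda_{i+1}\ge d_i-d_{i+1}$, i.e.\ $\lambda_{i+1}\le 2\lambda_{i+1}-\lambda_{i+2}-\lambda_{i+1}+\lambda_i$... more plainly $d_i\ge d_i-d_{i+1}$, which is just $d_{i+1}\ge 0$ — trivially true — so there is always enough horizontal room, and the construction never gets stuck. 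I would present the construction explicitly and then the verification as a short case check.
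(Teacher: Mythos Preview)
Your proposal is correct and follows essentially the same approach as the paper. In the ``if'' direction you arrive (after some false starts) at exactly the paper's one-line argument: $\lambda_i-\lambda_{i+1}=\sigma(\lambda_i)$ together with $\sigma$ non-decreasing gives convexity. In the ``only if'' direction your interpolated construction---placing $d_i-d_{i+1}$ unit jumps of $\sigma$ inside each interval $(\lambda_{i+1},\lambda_i]$ and setting $\sigma(h)=h$ on $(0,\lambda_\ell]$---is precisely the paper's construction of choosing $(\lambda_i-\lambda_{i+1})-(\lambda_{i+1}-\lambda_{i+2})$ picked layers in each such interval, just phrased in terms of $\sigma$ directly rather than via the geometric layer-picking language of Observation~\ref{obs:interpretation}.
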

\begin{proof}
First assume that $\lambda$ is a stable configuration of a well-behaved $\sigma$-Bulgarian solitaire. Then Lemma~\ref{lm:stable} says that $\lambda_i - \lambda_{i+1} = \sigma(\lambda_i)\ge 0$ for all $i\ge 1$. As a well-behaved $\sigma$ is non-decreasing, this inequality implies that $\lambda$ is convex.

To prove the converse, assume that $\lambda$ is a convex configuration with $\ell$ nonzero piles. Then for each $i\ge 1$ we can choose a subset of $(\lambda_i - \lambda_{i+1})-(\lambda_{i+1}-\lambda_{i+2})$ layers in the interval of layers $(\lambda_{i+1},\lambda_i]$. Note that this means all layers in the interval $(0,\lambda_\ell]$ are chosen, in particular layer 1. A well-behaved $\sigma$-Bulgarian solitaire is therefore defined by picking the chosen layers in each move. Moreover, for all $i\ge 1$ the corresponding $\sigma$ will satisfy $\sigma(\lambda_i)=\lambda_i - \lambda_{i+1}$ as the latter expression equals the number of picked layers up to layer $\lambda_i$ (see Observation~\ref{obs:interpretation}). Thus, $\lambda$ is a stable configuration of this  well-behaved $\sigma$-Bulgarian solitaire.
\end{proof}

\section{The concept of limit shapes of stable and recurrent configurations}\label{sec:limit-concept}
We shall consider a sequence of well-behaved $\sigma_n$, for $n=1,2,\dots$.  Let $f_{\sigma_n} :\mathcal{P}(n) \rightarrow \mathcal{P}(n)$ denote the map on integer partitions of $n$ defined by a move of the $\sigma_n$-Bulgarian solitaire. For any $n$, the $\sigma_n$-Bulgarian solitaire can be regarded as the deterministic process on $\mathcal{P}(n)$ defined by the iteration of the map $f_{\sigma_n}$. 

Given a process on $\mathcal{P}(n)$ defined by the iteration of any map $f_n:\mathcal{P}(n)\rightarrow\mathcal{P}(n)$, we say that a configuration $\lambda\in\mathcal{P}(n)$ is \emph{stable} with respect to this process in case $f(\lambda)=\lambda$, and that the configuration is \emph{recurrent} if there exists a positive integer $k$ such that $f_n^k(\lambda)=\lambda$. Thus, the stable configurations constitute a subset of the recurrent configurations. Note that the set of all configurations, $\mathcal{P}(n)$, is finite. Regardless of choice of starting configuration, the process must therefore inevitably enter the set of recurrent configurations after a finite number of moves. We shall now define what we mean by limit shapes of stable or recurrent configurations. 

\subsection{Downscaling of diagram-boundary functions}
\label{sec:downscaling}
For any partition $\lambda$, define its \textit{diagram-boundary function} as the nonnegative, weakly decreasing and piecewise constant function $\partial\lambda:\Rnn\rightarrow\mathbb{R}$ given by
\[
\partial\lambda(x)=\lambda_{\lfloor x \rfloor+1}.
\]
To illustrate, Figure~\ref{fig:yd_example} depicts the function graph $y=\partial\lambda(x)$ for the partition $\lambda=(4,4,2,1,1, 0, 0, \dots)$. 

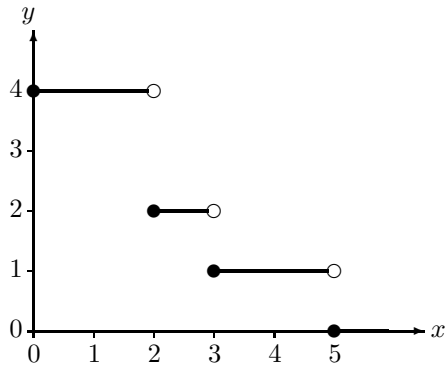
\begin{figure}[ht]
\setlength{\unitlength}{0.08cm}
\centering
\begin{picture}(75,60)
\put(4,5){\vector(0,1){50}}
\put(4,5){\vector(1,0){65}}
\put(3,0){$0$}
\put(4,4){\line(0,1){1}}
\put(13,0){$1$}
\put(14,4){\line(0,1){1}}
\put(23,0){$2$}
\put(24,4){\line(0,1){1}}
\put(33,0){$3$}
\put(34,4){\line(0,1){1}}
\put(43,0){$4$}
\put(44,4){\line(0,1){1}}
\put(53,0){$5$}

\put(0,4){$0$}
\put(3,5){\line(1,0){1}}
\put(0,14){$1$}
\put(3,15){\line(1,0){1}}
\put(0,24){$2$}
\put(3,25){\line(1,0){1}}
\put(0,34){$3$}
\put(3,35){\line(1,0){1}}
\put(0,44){$4$}

\linethickness{1.3pt}
\put(4,45){\circle*{2}}
\put(24,45){\circle{2}}
\put(4,45){\line(1,0){19}}

\put(24,25){\circle*{2}}
\put(34,25){\circle{2}}
\put(24,25){\line(1,0){9}}

\put(34,15){\circle*{2}}
\put(34,15){\line(1,0){19}}
\put(54,15){\circle{2}}

\put(54,5){\circle*{2}}
\put(54,5){\line(1,0){9}}

\put(70,4){$x$}
\put(2,57){$y$}
\end{picture}

\caption{Function graph $y=\partial\lambda(x)$ for the partition $\lambda=(4,4,2,1,1)\in\mathcal{P}(12)$.\label{fig:yd_example}}
\end{figure}

To achieve limiting behavior of such function graphs as $n$ grows we need to rescale diagrams depending on the value of $n$. Following \cite{Eriksson2012575} and \cite{VershikStatMech} we apply a scaling factor ${a_n>0}$ such that all row lengths are multiplied by $1/a_n$ and all column heights are multiplied by $a_n/n$, yielding a constant area of 1. To avoid having to specify the scaling factor we shall consistently make the choice $a_n = n/\lambda_1$, such that the height of the diagram is also scaled to 1. Thus, given a partition $\lambda$ we define the \emph{downscaled} diagram-boundary function of $\lambda$ as the positive, real-valued, weakly decreasing and piecewise constant function $\rescaled{\lambda}:\Rnn\rightarrow\Rnn$ given by
\begin{equation}
\label{eq:def_rescaled}
\rescaled{\lambda}(x)=\dfrac{1}{\lambda_1}\partial\lambda(xn/\lambda_1)=\frac{1}{\lambda_1}\lambda_{\lfloor xn/\lambda_1 \rfloor+1}.
\end{equation}

\subsection{Limit shapes of recurrent configurations}
Given an infinite family of maps on configurations, $\{ f_n : \mathcal{P}(n)\rightarrow\mathcal{P}(n) \}$ for $n=1,2,3,\dots$, we say that $\phi:\Rnn\rightarrow\Rnn$ is a \emph{limit shape of recurrent configurations} of the corresponding family of processes if the downscaled diagrams of recurrent configurations with respect to $f_1, f_2, f_3, \dots$ converge to $\phi$ in the following sense:
\begin{equation}
\label{eq:deterministic_limitshape_def}
\partial{\lambda^{(n)}}(x) \to \phi(x) \text{ as }n\to\infty
\end{equation}
for all $x>0$, where $\lambda^{(n)}$ is any recurrent configuration of $f_n$.
% and $\|\cdot\|_{\infty}$ denotes the max-norm $\| f \|_{\infty}=\sup \big\{|f(x)|:x\geq 0\big\}$. 

\subsection{Limit shapes of stable configurations of well-behaved $\sigma_n$-solitaires}
Consider a sequence of well-behaved $\sigma_n$ for $n=1,2,\dots$. For each value of $n$, consider $\sigma_n$-Bulgarian solitaire and let 
\[
n^{\ast}=n^{\ast}(n):= \max\{n'\le n : \text{ there exists a stable configuration of $n'$ cards\}}.
\]
The number $n^{\ast}$ is well-defined as there always exists a stable configuration on a single card. Define $\lambda^{(n^\ast)}$ as the stable configuration on $n^{\ast}$ cards; this is well-defined according to Theorem~\ref{thm:q-unique}. According to Corollary~\ref{cor:sigma-nextstable}, the choice $\lambda^{(n^\ast(n))}$ as the stable configuration corresponding to $n$ will be the same for at most $\ell(\lambda^{(n^\ast)(n)})+1$ values of $n$. Thus, $n^{\ast}(n)$ tends to infinity as $n$ tends to infinity. In line with the limit shape concept defined in equation (\ref{eq:deterministic_limitshape_def}), we define a \emph{limit shape of stable configurations} for the sequence of well-behaved $\sigma_n$, $n=1,2,\dots$, as a function $\phi:\Rnn\rightarrow\Rnn$ such that 
\begin{equation}
\label{eq:stable_limitshape_def}
%\|\partial{\lambda^{(n^{\ast}(n))}}(x)-\phi(x)\|_{\infty}\rightarrow 0.
\partial{\lambda^{(n^{\ast}(n))}}(x) \to \phi(x) \text{ as }n\to\infty
\end{equation}
for all $x>0$.

\section{Characterization of limit shapes of stable configurations of well-behaved $\sigma_n$-solitaires}\label{sec:stable-limit}
It is well-known that the Bulgarian solitaire has a stable configuration if only if the total number of cards in the deck is a triangular number, $n=1+2+\ldots+k$ for some positive integer $k$, in which case the unique stable configuration has one pile of each integer size from $k$ down to $1$ \cite{Brandt}. Thus, the Young diagrams of stable configurations are staircase shaped and hence the limit shape, as $n$ tends to infinity, is a triangle. By the convention that the height and area are scaled to unity, the limit shape will be a triangle of height 1 and width 2.

When generalizing from ordinary Bulgarian solitaire to well-behaved $\sigma_n$-Bulgarian solitaire, the limit shapes that arise will not necessarily be triangular. Indeed, in Theorem~\ref{thm:stable-limit-convex} we prove that any convex shape 
(with some properties)
%of unit height and unit area 
can be obtained as the limit shape of a suitably chosen family of well-behaved $\sigma_n$, for $n=1,2,\dots$. 
First we need a lemma.

\begin{lemma}\label{lm:convexdiff}	
	Let $f_n\,:\,(0,\infty)\rightarrow\mathbb{R}$ be convex functions	
	for $n=1,2,\dotsc$, and suppose there is a function $f\,:\,(0,\infty)\rightarrow\mathbb{R}$	
	such that $\lim_{n\rightarrow\infty}f_n(x)=f(x)$ for any $x>0$.	
	Then we also have pointwise convergence of right derivatives:	
	$\lim_{n\rightarrow\infty}(f_n)_R'(x)=f_R'(x)$ for any $x>0$.
\end{lemma}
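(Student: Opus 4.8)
The plan is to exploit the standard fact that convexity gives monotonicity of difference quotients, and then combine this with the pointwise convergence of the $f_n$ to transfer the limit onto right derivatives. Fix $x>0$. For a convex function $g$ on $(0,\infty)$, the right derivative exists and can be written as $g_R'(x)=\inf_{t>0}\frac{g(x+t)-g(x)}{t}$, with the difference quotient $\frac{g(x+t)-g(x)}{t}$ nondecreasing in $t$; similarly, $g_R'(x)\ge \frac{g(x)-g(x-s)}{s}$ for every $s\in(0,x)$. These are the only structural facts about convex functions I will need.

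\medskip

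\textbf{Lower bound on $\liminf$.} For any $t>0$, convexity of $f_n$ gives $(f_n)_R'(x)\le \frac{f_n(x+t)-f_n(x)}{t}$. Letting $n\to\infty$ and using $f_n(x+t)\to f(x+t)$, $f_n(x)\to f(x)$, we get $\limsup_{n\to\infty}(f_n)_R'(x)\le \frac{f(x+t)-f(x)}{t}$. Since $f$ is a pointwise limit of convex functions it is itself convex, so taking the infimum over $t>0$ on the right-hand side yields $\limsup_{n\to\infty}(f_n)_R'(x)\le f_R'(x)$.

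\medskip

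\textbf{Upper bound on $\limsup$.} Symmetrically, for any $s\in(0,x)$, convexity of $f_n$ gives $(f_n)_R'(x)\ge \frac{f_n(x)-f_n(x-s)}{s}$. Passing to the limit, $\liminf_{n\to\infty}(f_n)_R'(x)\ge \frac{f(x)-f(x-s)}{s}$. Now let $s\to 0^+$: the right-hand side tends to $f_L'(x)$, the left derivative of the convex function $f$ at $x$. This gives $\liminf_{n\to\infty}(f_n)_R'(x)\ge f_L'(x)$. Combining the two bounds, $f_L'(x)\le \liminf (f_n)_R'(x)\le \limsup (f_n)_R'(x)\le f_R'(x)$.

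\medskip

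\textbf{Closing the gap.} The two bounds coincide exactly when $f_L'(x)=f_R'(x)$, i.e.\ when $f$ is differentiable at $x$; a convex function on an open interval is differentiable at all but countably many points, so this does not immediately give the claim for \emph{every} $x>0$. The fix at a point $x$ where $f_L'(x)<f_R'(x)$ is to run the upper-bound argument from the right instead: for $t>0$, convexity gives $(f_n)_R'(x)\ge (f_n)_R'(x') \cdot \mathbf{1}$ is too weak, so instead use $(f_n)_R'(x)\le \frac{f_n(x+t)-f_n(x)}{t}$ was the wrong direction — rather, I should note that actually only the combination $\limsup(f_n)'_R(x)\le f'_R(x)$ was needed together with a matching lower bound of the \emph{same} flavor; so I instead bound $(f_n)'_R(x)\ge \frac{f_n(x+t)-f_n(x)}{t+t'}\cdot$ — cleaner: for $h>0$ and any $k>0$, $(f_n)_R'(x)\ge \frac{f_n(x+k+h)-f_n(x+k)}{h}$ is false too. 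The genuinely correct and clean route, and the one I will take, is: \emph{first} prove the lemma at every point of differentiability of $f$ as above, and \emph{then} observe that since $f_R'$ is right-continuous and nondecreasing (being the right derivative of a convex function) and likewise each $(f_n)_R'$ is nondecreasing, pointwise convergence on the dense set of continuity points of $f_R'$, together with the monotonicity, upgrades to convergence at \emph{every} $x$: given $x$, pick continuity points $x^-<x<x^+$ with $f_R'(x^\pm)$ close to $f_R'(x)$ (possible by right-continuity, using $x^-\to x^+$ from below through differentiability points and $x^+\downarrow x$), sandwich $(f_n)_R'(x)$ between $(f_n)_R'(x^-)$ and $(f_n)_R'(x^+)$, pass to the limit, and let $x^\pm\to x$.

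\medskip

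\textbf{The main obstacle} is exactly this last point: handling the at-most-countably-many non-differentiability points of $f$, where the naive two-sided squeeze only pins $(f_n)_R'(x)$ between $f_L'(x)$ and $f_R'(x)$. The monotonicity-plus-density upgrade described above resolves it, but it must be stated carefully, paying attention to which one-sided limits are being taken; everything else is a routine application of the difference-quotient characterization of convexity.
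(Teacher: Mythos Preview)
Your two-sided squeeze via difference quotients is exactly the paper's argument: they also show $f_L'(x)-\varepsilon < (f_n)_R'(x) < f_R'(x)+\varepsilon$ for large $n$, and then simply assert the conclusion. You were right to flag the gap at kinks of $f$.

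Unfortunately your ``closing the gap'' step does not work, and cannot be made to work, because the lemma as stated is \emph{false}. Take $f_n(x)=\lvert x-(1+1/n)\rvert$ on $(0,\infty)$. Each $f_n$ is convex, $f_n\to f$ pointwise where $f(x)=\lvert x-1\rvert$, yet $(f_n)_R'(1)=-1$ for every $n$ while $f_R'(1)=+1$. The flaw in your density upgrade is precisely where you claim one can pick differentiability points $x^-<x$ with $f_R'(x^-)$ close to $f_R'(x)$: for convex $f$ one has $\lim_{y\uparrow x}f_R'(y)=f_L'(x)$, not $f_R'(x)$, so at a genuine kink the left-hand side of the sandwich only reaches $f_L'(x)$ and the gap does not close. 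Monotonicity of $(f_n)_R'$ then gives nothing better than $f_L'(x)\le\liminf (f_n)_R'(x)\le\limsup (f_n)_R'(x)\le f_R'(x)$, which is exactly the squeeze you already had.

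In short: your argument, like the paper's, is complete and correct at every point where $f$ is differentiable, which is all but countably many $x$. That is the most one can prove; the ``for any $x>0$'' in the statement is too strong. (For the paper's application in Theorem~\ref{thm:stable-limit-convex} this weaker conclusion still suffices, since the property ``$\phi_R'(x)/c\in\mathbb{Z}$'' then passes from the dense set of differentiability points to all $x$ by right-continuity of $\phi_R'$.)
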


\begin{proof}
	Clearly, $f$ is convex. Therefore the left derivative $f_L'$ and the right derivative $f_R'$ exists in all points in $(0,\infty)$.
	By the definition of right derivative, for any $\varepsilon > 0$ there exists an $h>0$ such that
	\[
	\frac{f(x+h)-f(x)}{h} < f_R'(x)+\varepsilon
	\]
	for all $x>0$. Since $\lim_{n\to\infty}f_n(x) = f(x)$,
	there exists an $N\in\mathbb{N}$ such that
	\[
	\frac{f_n(x+h)-f_n(x)}{h} < f_R'(x)+\varepsilon \quad \text{for all }n>N.
	\]
	By the convexity of $f_n$, we have $\frac{f_n(x+h)-f_n(x)}{h} \ge f'_n(x)$.
	Therefore,
	\[
	f'_n(x) 
	%\le \frac{f_n(x+h)-f_n(x)}{h} 
	< f_R'(x)+\varepsilon  \quad \text{for all }n>N.
	\] 
	An analogous reasoning for the left-derivative gives $f'_n(x) > f_L'(x)-\varepsilon$.	
	Thus, $f_L'(x)-\varepsilon < f'_n(x) < f_R'(x)+\varepsilon$ for all $n>N$.
	As a consequence we must have $\lim_{n\to\infty}f'_n(x) = f'(x)$, in particular
	$\lim_{n\to\infty}(f_n)_R'(x) = f_R'(x)$ for any $x>0$.
\end{proof}

\begin{theorem}\label{thm:stable-limit-convex}
	Let $\phi:(0,\infty)\rightarrow\Rnn$ be a
	function and let	
	$a_1,a_2,\cdots\rightarrow\infty$ be any (positive) scaling factors	
	such that $a_n^2/n$ converges to some $c\ge0$ as $n\rightarrow\infty$.	
	Then the following are equivalent.	
	\begin{itemize}		
		\item[(a)]		
		There is a sequence of well-behaved $\sigma_n$, $n=1,2,\dots$, such that		
		$\phi$ is a stable-limit shape of $(\sigma_n)$ under the scaling		
		$(a_n)$.
		\item[(b)]		
		$\phi$ is convex with $\int_0^\infty\phi(x)\,dx\le1$,
		and if $c>0$		
		the right derivative $\phi_R'(x)$		
		is an integer multiple of $c$ for any $x>0$.		
	\end{itemize}
\end{theorem}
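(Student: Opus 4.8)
\medskip
\noindent\textbf{Proof proposal.} The two implications are proved separately. Throughout, for a partition $\lambda$ write $D_n\lambda(x):=\tfrac{a_n}{n}\lambda_{\lfloor a_nx\rfloor+1}$ for the diagram-boundary function downscaled under the scaling $(a_n)$, so that statement (a) asserts $D_n\lambda^{(n^{\ast}(n))}\to\phi$ pointwise on $(0,\infty)$.

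\smallskip
\noindent\emph{(a)$\Rightarrow$(b).} Put $\lambda^{(n)}:=\lambda^{(n^{\ast}(n))}$ and let $d^{(n)}_j:=\lambda^{(n)}_j-\lambda^{(n)}_{j+1}$ be its column drops. By Lemma~\ref{lm:stable-finite-shape} each $\lambda^{(n)}$ is convex, so $(d^{(n)}_j)_j$ is a nonincreasing sequence of nonnegative integers, eventually $0$. Encode $\lambda^{(n)}$ by the piecewise-linear function $g_n$ on $[0,\infty)$ with $g_n(j)=\lambda^{(n)}_{j+1}$ at integers $j\ge0$; its slope on $(j,j+1)$ is $-d^{(n)}_{j+1}$, a nondecreasing sequence, so $g_n$ is convex, hence so is $G_n(x):=\tfrac{a_n}{n}g_n(a_nx)$. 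The first step is to show $G_n\to\phi$ pointwise on $(0,\infty)$: one has $D_n\lambda^{(n)}(x)-\tfrac{a_n}{n}d^{(n)}_{\lfloor a_nx\rfloor+1}\le G_n(x)\le D_n\lambda^{(n)}(x)$, and fixing $x'\in(0,x)$ and using monotonicity of $(d^{(n)}_j)_j$, the quantity $\tfrac{a_n}{n}d^{(n)}_{\lfloor a_nx\rfloor+1}$ is at most the average of $\tfrac{a_n}{n}d^{(n)}_j$ over the $\approx a_n(x-x')$ indices $j\in(a_nx',a_nx]$, which is $\bigl(D_n\lambda^{(n)}(x')-D_n\lambda^{(n)}(x)\bigr)\big/\bigl(a_n(x-x')\bigr)(1+o(1))\to0$. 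Hence $G_n\to\phi$ pointwise, so $\phi$ is convex. Integrating the change-of-variables identity $\int_0^\infty D_n\lambda^{(n)}=n^{\ast}(n)/n\le1$ and applying Fatou's lemma gives $\int_0^\infty\phi\le1$. Finally, the right slope of $G_n$ at any $x>0$ equals $-\tfrac{a_n^2}{n}d^{(n)}_{\lfloor a_nx\rfloor+1}$, an integer multiple of $a_n^2/n$; by Lemma~\ref{lm:convexdiff}, $(G_n)_R'(x)\to\phi_R'(x)$, so when $c>0$ we may divide by $a_n^2/n\to c$ to conclude that the integers $-d^{(n)}_{\lfloor a_nx\rfloor+1}$ converge, hence are eventually constant, to $\phi_R'(x)/c$; in particular $\phi_R'(x)\in c\mathbb Z$.

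\smallskip
\noindent\emph{(b)$\Rightarrow$(a).} For each large $n$ I would build a convex integer partition $\mu^{(n)}$ with \emph{exactly} $n$ cards such that $D_n\mu^{(n)}\to\phi$; then Lemma~\ref{lm:stable-finite-shape} yields a well-behaved $\sigma_n$ having $\mu^{(n)}$ as a stable configuration, and since $\mu^{(n)}$ has $n$ cards, $n^{\ast}(n)=n$ and $\lambda^{(n^{\ast}(n))}=\mu^{(n)}$ by Theorem~\ref{thm:q-unique}, which is (a). To guess $\mu^{(n)}$: forcing $D_n\mu^{(n)}(x)\approx\phi(x)$ makes the $j$th drop $\approx\tfrac{n}{a_n}\bigl(\phi((j-1)/a_n)-\phi(j/a_n)\bigr)\approx-\tfrac{n}{a_n^2}\phi_R'(j/a_n)$, so I would take $d^{(n)}_j$ to be a monotone integer rounding of this. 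When $c>0$ this quantity equals $\tfrac{nc}{a_n^2}\cdot\bigl(-\phi_R'(j/a_n)/c\bigr)$, an integer times a factor tending to $1$, so rounding costs only $o(1)$ relative error --- this is exactly where the hypothesis $\phi_R'\in c\mathbb Z$ enters, so that the drops can be honest integers forming a convex partition; when $c=0$ the drops diverge and rounding is again harmless. Since $\phi_R'$ is nondecreasing, $(d^{(n)}_j)_j$ is nonincreasing with finitely many nonzero terms, so $\mu^{(n)}_j:=\sum_{k\ge j}d^{(n)}_k$ defines a convex partition; and because $a_n^2/n\to c$ one checks $\mu^{(n)}_j\approx\tfrac{n}{a_n}\phi(j/a_n)$ (hence $D_n\mu^{(n)}\to\phi$, using continuity of the convex function $\phi$) and $|\mu^{(n)}|=\sum_j j\,d^{(n)}_j\to n\int_0^\infty\phi\le n$ (summation by parts). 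A routine adjustment then pins $|\mu^{(n)}|$ to exactly $n$: a slight rightward shift of the profile near the origin --- which perturbs the limit by $o(1)$ only --- brings $|\mu^{(n)}|$ strictly below $n$, and the leftover $n-|\mu^{(n)}|$ boxes are stacked on the first column, which merely increases $d^{(n)}_1$ (so convexity survives) and alters $D_n\mu^{(n)}$ only on $[0,1/a_n)$.

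\smallskip
\noindent\emph{Reduction of (b)$\Rightarrow$(a) to a clean case, and the main obstacle.} When $\phi$ is unbounded, has unbounded support, or has $\phi_R'$ taking infinitely many values, I would first replace $\phi$ by $\phi_k:=\max\bigl(0,\ \sup_{y\in[1/k,k]}\bigl(\phi(y)+\phi_R'(y)(\cdot-y)\bigr)\bigr)$ for $k=1,2,\dots$: this is convex, agrees with $\phi$ on $[1/k,k]$, satisfies $0\le\phi_k\le\phi$ and $\int\phi_k\le1$, is bounded with compact support, and has $(\phi_k)_R'$ taking finitely many values, all in $c\mathbb Z$. The construction above applies cleanly to each $\phi_k$, and a diagonal choice $k=k(n)\to\infty$ slowly then gives $(\sigma_n)$ with $D_n\lambda^{(n^{\ast}(n))}\to\phi$. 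The genuinely delicate point --- and the main obstacle --- is the construction in (b)$\Rightarrow$(a): producing a truly \emph{convex} integer partition that tracks the curve $y=\phi(x)$ to within $o(n/a_n)$ vertically while keeping the total size pinned to $n$. Convexity is exactly why ``$\phi_R'\in c\mathbb Z$ when $c>0$'' is indispensable: the drops must form a nonincreasing \emph{integer} sequence shadowing $-\tfrac{n}{a_n^2}\phi_R'$, and this is possible without $O(1)$ rounding jitter (which would ruin either convexity or the fit) only because $-\tfrac{n}{a_n^2}\phi_R'(j/a_n)$ lies within $o(1)$ of the integer $-\phi_R'(j/a_n)/c$. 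Verifying the Riemann-sum estimates --- especially near the origin, where $x\bigl(-\phi_R'(x)\bigr)$ may be unbounded --- and the monotone-rounding step are where the real care is needed.
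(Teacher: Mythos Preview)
Your proposal is correct and tracks the paper's proof closely. For (a)$\Rightarrow$(b) the arguments are essentially identical: both pass from the step function to a piecewise-linear convex interpolant (your $G_n$, the paper's $\phi_n$), deduce convexity of $\phi$, use Fatou for $\int\phi\le1$, and invoke Lemma~\ref{lm:convexdiff} to get convergence of right derivatives and hence the integrality condition when $c>0$.

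For (b)$\Rightarrow$(a) both constructions build a convex partition from drops approximating $-\tfrac{n}{a_n^2}\phi_R'(j/a_n)$ and then enlarge the first column to reach exactly $n$ cards; the difference is only in packaging. The paper splits into cases: when $c>0$ it replaces $a_n$ by the exact scale $\sqrt{cn}$ so that the drops $-\tfrac1c\,\phi_R'(j/\sqrt{cn})$ are \emph{exactly} integers with no rounding needed, and when $c=0$ it truncates the sum defining $\lambda_k^{(n)}$ at an explicit index $s_na_n$ with $s_n\to\infty$ but $s_na_n^2/n\to0$; in both cases it shows $\sum_k\lambda_k^{(n)}\le n$ directly from $\lambda_k^{(n)}\le\tfrac{n}{a_n}\phi(k/a_n)$ and the decreasing-Riemann-sum bound, rather than via your asymptotic $|\mu^{(n)}|\to n\int\phi$ plus a shift. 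Your truncation $\phi_k$ followed by diagonalization is a valid alternative that unifies the two cases and handles unboundedness near $0$ in one stroke; the paper's route is slightly more direct and avoids the diagonal argument, and its use of $\sqrt{cn}$ in the $c>0$ case makes especially transparent why the hypothesis $\phi_R'\in c\mathbb Z$ suffices.
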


\begin{proof}	
	%Recall from the definition \eqref{eq:stable_limitshape_def}
	%that $\phi$ being a stable-limit shape means that $\lim_{n\rightarrow\infty}\phi_n(x)=\phi(x)$	
	%for any $x>0$. 	
	To prove that (a) implies (b), suppose $\phi$ is a stable-limit shape	
	of $(\sigma_n)$ under the scaling $(a_n)$. Let $\lambda^{(n^\ast)}$	
	denote the stable configuration in the $\sigma_n$-Bulgarian solitaire	
	with $n^\ast(n)$ cards.
	
	For each	
	$n$, define a piecewise linear function	
	$\phi_n\,:\,(0,\infty)\rightarrow\Rnn$ by letting	
	\[	
	\phi_n(x)=\frac{a_n}{n}	
	\bigl((1-t(x))\lambda^{(n^\ast)}_{\lfloor a_n x\rfloor+1}	
	+t(x)\lambda^{(n^\ast)}_{\lfloor a_n x\rfloor+2}\bigr),	
	\]	
	where $t(x):=a_n x-\lfloor a_n x\rfloor$. Since each $\phi_n$ is convex, so is $\phi$, and	
	by Fatou's lemma	
	\[	
	\int_0^\infty \phi(x)\,dx \le \liminf_{n\rightarrow\infty}	
	\int_0^\infty \phi_n(x)\,dx\le1.	
	\]
	
	Now suppose $c>0$.	
	By Lemma~\ref{lm:convexdiff}, $(\phi_n)_R'(x)\rightarrow\phi_R'(x)$	
	for any $x>0$, so	
	\[	
	\frac{(\phi_n)_R'(x)}{a_n^2/n}\rightarrow\phi_R'(x)/c	
	\]	
	as $n\rightarrow\infty$. But	
	\[	
	\frac{(\phi_n)_R'(x)}{a_n^2/n}=	
	(\lambda^{(n^\ast)}_{\lfloor a_n x\rfloor+2}	
	-\lambda^{(n^\ast)}_{\lfloor a_n x\rfloor+1}),	
	\]	
	which is an integer, so it follows that $\phi_R'(x)/c$ is an integer.
	
	For the other direction, suppose (b) holds true and that $c=0$.	
	Since $\phi$ is convex, it has a right derivate $\phi'_R$.	
	Let $s_1,s_2,\dotsc$ be a sequence of positive real numbers such that	
	$s_n\rightarrow\infty$ but $s_na_n^2/n\rightarrow0$ as	
	$n\rightarrow\infty$, and such that	
	$s_na_n$ is an integer for any $n$.
		
	Define a partition $\lambda^{(n)}$ by letting	
	\[	
	\lambda^{(n)}_k =	
	\sum_{i=k+1}^{s_na_n}	
	\lfloor-\frac{n}{a_n^2}\phi'_R(i/a_n)\rfloor	
	\]	
	for $k=1,2,\dotsc$.	
	By the convexity of $\phi$, we have	
	\begin{align*}	
	\lambda^{(n)}_k &\le -\frac{n}{a_n}	
	\int_{k/a_n}^\infty\phi'_R(x)\,dx	
	=\frac{n}{a_n}\,\phi(k/a_n), \\	
	\lambda^{(n)}_k &\ge -\frac{n}{a_n}	
	\int_{(k+1)/a_n}^{s_n}\phi'_R(x)\,dx	
	-s_na_n	
	=\frac{n}{a_n}\bigl(\phi((k+1)/a_n)-\phi(s_n)\bigr)-s_na_n.	
	\end{align*}	
	From the first of these inequalities it follows that	
	\[	
	\sum_{k=1}^\infty\lambda^{(n)}_k	
	\le a_n\int_0^\infty\frac{n}{a_n}\,\phi(x)\,dx \le n.	
	\]	
	Now, let $\mu^{(n)}_k=\lambda^{(n)}_k$ for $k=2,3,\dotsc$ but	
	choose $\mu^{(n)}_1$ so that $\mu^{(n)}_1+\mu^{(n)}_2+\dotsb=n$.	
	Clearly, $\mu^{(n)}$ is monotonic convex, so by	
	Lemma~\ref{lm:stable-finite-shape} it is a stable configuration of	
	a $\sigma_n$-Bulgarian solitaire for some well-behaved rule $\sigma_n$.	
	Since $\phi$ is continuous, for any $x>0$ we have	
	\[	
	\frac{a_n}{n}\lambda_{\lceil a_n x\rceil}	
	\le\phi(\lceil a_n x\rceil/a_n)	
	\rightarrow\phi(x)	
	\]	
	and, since $s_n\rightarrow\infty$ and $s_n a_n^2/n\rightarrow 0$ as $n\rightarrow\infty$, for any $x>0$,	
	\[	
	\frac{a_n}{n}\lambda_{\lceil a_n x\rceil}	
	\ge	
	\phi(\lceil a_n x+1\rceil/a_n)-\phi(s_n)-\frac{s_na_n^2}{n}	
	\rightarrow \phi(x),
	\]
	in other words, $\frac{a_n}{n}\lambda_{\lceil a_n x\rceil} \to \phi(x)$ for any $x>0$,
	establishing that $\phi$ is the desired stable-limit shape by the definition \eqref{eq:stable_limitshape_def}.
	
	Now suppose (b) holds true and $c>0$.	
	Define a partition $\lambda^{(n)}$ by letting	
	\[	
	\lambda^{(n)}_k = -\frac1c\sum_{i=k+1}^\infty\phi'_R(i/\sqrt{cn})	
	\]
	for $k=1,2,\dotsc$.	
	Since $\phi$ is convex, we have	
	\begin{align*}	
	\lambda^{(n)}_k &\le -\frac{\sqrt{cn}}{c}	
	\int_{k/\sqrt{cn}}^\infty\phi'_R(x)\,dx	
	=\sqrt{\frac{n}{c}}\,\phi(k/\sqrt{cn}), \\	
	\lambda^{(n)}_k &\ge -\frac{\sqrt{cn}}{c}	
	\int_{(k+1)/\sqrt{cn}}^\infty\phi'_R(x)\,dx	
	=\sqrt{\frac{n}{c}}\,\phi((k+1)/\sqrt{cn}).	
	\end{align*}	
	From the first of these inequalities it follows that	
	\[	
	\sum_{k=1}^\infty\lambda^{(n)}_k	
	\le\sqrt{cn}\int_0^\infty\!\!\sqrt{\frac{n}{c}}\,\phi(x)\,dx \le n.	
	\]
		
	Now, let $\mu^{(n)}_k=\lambda^{(n)}_k$ for $k=2,3,\dotsc$ but	
	choose $\mu^{(n)}_1$ so that $\mu^{(n)}_1+\mu^{(n)}_2+\dotsb=n$.	
	Clearly, $\mu^{(n)}$ is monotonic convex, so by	
	Lemma~\ref{lm:stable-finite-shape} it is a stable configuration of	
	a $\sigma_n$-Bulgarian solitaire for some well-behaved rule $\sigma_n$.	
	Finally, since $a_n^2/n\rightarrow c$ as $n\rightarrow\infty$, and	
	since $\phi$ is continuous, for any $x>0$ we have	
	\[	
	\phi(x)\leftarrow\frac{a_n}{\sqrt{cn}}\phi(\lceil a_n x+1\rceil/\sqrt{cn})	
	\le\frac{a_n}{n}\lambda_{\lceil a_n x\rceil}	
	\le\frac{a_n}{\sqrt{cn}}\phi(\lceil a_n x\rceil/\sqrt{cn})	
	\rightarrow\phi(x),
	\]	
	and hence $\phi$ is the desired limit shape.
\end{proof}

Note that for a finite $n$ a Young diagram will have unit area under our conventional scaling
with scaling factors $(a_n)$.
The reason why we have $\int_0^\infty\phi(x)\,dx\le 1$ in Theorem~\ref{thm:stable-limit-convex} 
is that the largest pile (or a few of the largest piles) may be arbitrarily large without
affecting the limit shape $\phi$. Our limit shape definitions
\eqref{eq:deterministic_limitshape_def} and \eqref{eq:stable_limitshape_def} do not include $x=0$
as to allow for $\lim_{x\to 0+}\phi(x)$ to be infinite.

%Anledningen till att gränsformen kan ha en integral som är mindre än 1 är att den första högen (eller några av dom största högarna) kan vara godtyckligt stor utan att det påverkar gränsformen.

\section{A conjecture on limit shapes of recurrent configurations of well-behaved $\sigma_n$-solitaires}\label{sec:conjecture}

When a stable configuration exists, ordinary Bulgarian solitaire eventually reaches it. This does not hold in general for well-behaved $\sigma$-Bulgarian solitaire. One counter-example is given by the well-behaved $\sigma$-Bulgarian solitaire defined by $\sigma(h)=\lceil 3h/10\rceil$ on $n=11$ cards, as this game has both a stable configuration $(5,3,2,1)$ and a non-trivial cycle 
\[
(6,2,2,1) \mapsto (5,4,1,1) \mapsto (6,3,2) \mapsto (4,4,2,1) \mapsto (6,2,2,1).
\]
However, it is worth noting that the pile sizes in these recurrent configurations never deviated by more than one card from the corresponding pile sizes in the stable configuration. This is akin to the ordinary Bulgarian solitaire in the case when $n$ is not a triangular number so that no stable configuration exists; in that case the solitaire will eventually reach a cycle of recurrent configurations, and these are close to staircase shaped in the sense that they can all be constructed by starting with some staircase configuration $(k,k-1,\ldots,1)$ and adding at most one card to each pile, and possibly adding one more pile of size 1 \cite{AkinDavis1985,Bentz1987, Etienne1991, Griggs1998}. As $n$ grows to infinity and the diagram is rescaled such that its height and area are both equal to 1, the deviation of recurrent configurations from the perfect staircase tends to zero. Thus, ordinary Bulgarian solitaire has a limit shape, namely the triangle of height 1 and width 2. 

We believe that it is generally true that recurrent configurations must be sufficiently close to a stable configuration for a limit shape of stable configurations to also be a limit shape of recurrent configurations.

\begin{conjecture}\label{conj:stable-limit-is-limit}
If $\phi$ is a limit shape of the stable configurations of a sequence of well-behaved $\sigma_n$, then $\phi$ is also a limit shape of the recurrent configurations. 
\end{conjecture}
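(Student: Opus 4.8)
The plan is to control how far a recurrent configuration can deviate from the relevant stable configuration, and to show this deviation is negligible after the downscaling in \eqref{eq:def_rescaled}. Fix a sequence of well-behaved $\sigma_n$ with stable-limit shape $\phi$, and write $\lambda^{(n^\ast)}$ for the stable configuration on $n^\ast(n)$ cards, as in Section~\ref{sec:limit-concept}. The first step is a sandwiching argument: given a recurrent configuration $\mu^{(n)}\in\mathcal{P}(n)$ of $f_{\sigma_n}$, I want to show that $\mu^{(n)}$ is dominated, in the sense of Theorem~\ref{thm:dominance}, by the stable configuration determined by first pile $\lambda_1^{(n^\ast)}+O(1)$ and dominates the stable configuration with first pile $\lambda_1^{(n^\ast)}-O(1)$. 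By Corollary~\ref{cor:sigma-nextstable}, changing the top pile of a stable configuration by a bounded amount changes the total card count by only $O(\ell(\lambda^{(n^\ast)}))$; so such enveloping stable configurations exist with total counts within $O(\ell)$ of $n$. Since $\ell(\lambda^{(n^\ast)})/a_n \to$ the finite length of the support of $\phi$ (or is genuinely $o(a_n)$ when $\phi$ has unbounded support but $\int\phi\le 1$ forces the relevant tail to be short), this $O(\ell)$ discrepancy vanishes under the column-rescaling factor $a_n/n$.

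The heart of the matter is establishing that sandwich. The idea is to imitate the classical staircase argument for ordinary Bulgarian solitaire: start the $\sigma_n$-solitaire from the stable configuration $\lambda^{(n^\ast)}$ itself and add the leftover $n-n^\ast$ cards; since $n-n^\ast = O(\ell)$ by Corollary~\ref{cor:sigma-nextstable}, these extra cards can be distributed so that each pile grows by at most one and at most one new pile of size $1$ appears. By Theorem~\ref{thm:dominance}, playing one move from this perturbed configuration keeps it dominated by the move-image of the ``one larger'' stable configuration and dominating the move-image of $\lambda^{(n^\ast)}$; but by Lemma~\ref{lm:stable} those move-images are again stable configurations (unchanged), so the dominance envelope is \emph{preserved} under every move. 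Hence every configuration reachable from this starting point — in particular every recurrent configuration in its forward orbit — stays within the envelope. The remaining point is that \emph{every} recurrent configuration of $f_{\sigma_n}$ arises this way, i.e.\ lies between two bounded perturbations of $\lambda^{(n^\ast)}$; this should follow because any recurrent configuration is in the image of $f_{\sigma_n}$ and a bottom-layer-type argument (each $\sigma_n$-move leaves a configuration whose pile sizes are $\bar\sigma_n$ of the previous ones, hence already ``almost convex''), combined with the uniqueness of the stable configuration from Theorem~\ref{thm:q-unique}, pins it near $\lambda^{(n^\ast)}$.

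With the sandwich in hand, the conclusion is routine: for each fixed $x>0$, both enveloping stable configurations have downscaled diagram-boundary functions converging to $\phi(x)$ by \eqref{eq:stable_limitshape_def} (the bounded shift in the top pile and the $O(\ell)$ shift in total area do not affect the limit, exactly as noted in the remark after Theorem~\ref{thm:stable-limit-convex}), so $\partial{\mu^{(n)}}(x)\to\phi(x)$ as well, which is \eqref{eq:deterministic_limitshape_def}. The step I expect to be the genuine obstacle is proving that recurrent configurations cannot drift \emph{far} from the stable configuration in the unbounded-support / interpolating regimes: for ordinary Bulgarian solitaire the ``$+1$ per pile'' bound is classical and tight, but for a general well-behaved $\sigma_n$ one must rule out a slow accumulation of discrepancy over many moves. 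I would attack this via the surplus/deficit monotonicity flagged in Section~\ref{sec:conjecture} (the total surplus and deficit relative to $\lambda^{(n^\ast)}$ are non-increasing under a move), which caps the $\ell^1$-distance from the stable configuration by its value on any configuration in the orbit; bounding that initial $\ell^1$-distance by $O(\ell+ (n-n^\ast))=O(\ell)$ and dividing by the area normalization $n/a_n$ then kills it in the limit. Making the surplus/deficit bound do exactly this work — and checking it dominates the pointwise deviation uniformly on compact subsets of $(0,\infty)$ — is the part that needs care.
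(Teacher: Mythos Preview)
This statement is left as an \emph{open conjecture} in the paper; the authors explicitly write ``We leave this conjecture as an open problem'' and prove it only in the two extremal regimes of $q_n$-proportion solitaire (Sections~\ref{sec:q-stable}--\ref{sec:q-recurrent}) by arguments specific to those $\sigma_n$. So there is no general proof to compare your proposal against.

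Your proposal contains a genuine gap, and it sits exactly where you yourself flag ``the genuine obstacle.'' The sandwich-by-dominance argument only controls configurations in the forward orbit of one well-chosen starting configuration; but a recurrent configuration need not lie in that orbit --- the paper's own example on $n=11$ cards exhibits a nontrivial cycle disjoint from the stable fixed point. Your hand-wave that every recurrent configuration ``arises this way'' because it is in the image of $f_{\sigma_n}$ and hence ``almost convex'' does not go through: being in the image of one move gives no uniform closeness to the stable shape. You then fall back on the surplus/deficit monotonicity of Theorem~\ref{thm:total-surplus}. On a recurrent cycle $d^+_{\mathrm{tot}}$ is indeed constant (non-increasing plus periodic implies constant), but nothing in your argument bounds that constant. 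Your assertion that the ``initial $\ell^1$-distance'' is $O(\ell+(n-n^\ast))$ is circular: monotonicity bounds the surplus by its value at any point of the orbit, yet on a recurrent cycle every point has the \emph{same} surplus, so you have assumed what you need to prove. The paper is explicit about this missing piece: ``we now only need to prove that whenever the deviation from the closest stable configuration is large, a cancellation of a plus-card against a minus-card must eventually occur,'' and then explains why the multi-picking-layer structure of a general well-behaved $\sigma$ obstructs the period-catching argument that settles the single-layer (ordinary) case. Your proposal does not supply that cancellation argument, so it does not close the gap.
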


We leave this conjecture as an open problem. A first step toward its proof is that a configuration's total deviation from a stable configuration will often decrease but never increase during play, as we show below.

\subsection{Deviations, surplus, and deficit, with reference to a stable configuration}\label{sec:surplus}
Represent configurations by infinite vectors of pile sizes (a finite number of non-zero piles and an infinite tail of zeros). For a given $\sigma$-Bulgarian solitaire, fix some stable configuration $\lambda^{\ast}$ to be used as a reference and let $n^{\ast}$ denote the number of cards of $\lambda^{\ast}$. 

For any configuration $\lambda$ of $n$ cards we can calculate the component-wise difference to the reference configuration $\lambda^{\ast}$ to obtain an infinite \emph{deviation} vector $d(\lambda):=\lambda-\lambda^{\ast}$. The sum of the elements of the deviation vector must equal the difference in the number of cards in the two configurations: $d_1 + d_2 + \dotsb = n-n^{\ast}$, suppressing the dependence on $\lambda$ to avoid cumbersome notation.

The deviation vector can be decomposed as $d(\lambda)=d^+(\lambda) - d^-(\lambda)$ where the \emph{surplus} vector $d^+(\lambda)$ is given by $d^+_i = \max(d_i,0)$ for all $i\ge 1$, and the \emph{deficit} vector $d^-(\lambda)$ is given by  $d^-_i = \max(-d_i,0)$ for all $i\ge 1$. Denote the total surplus and total deficit in by $d^+_\text{tot}(\lambda)$ and $d^-_\text{tot}(\lambda)$, respectively. Then, obviously, 
\[
d^+_\text{tot}(\lambda) - d^-_\text{tot}(\lambda) = n-n^{\ast}.
\]
Thus, the difference between total surplus and total deficit is invariant under play.

\subsection{The total surplus never decreases in well-behaved $\sigma$-Bulgarian solitaires}
It is well-known and easy to see that ordinary Bulgarian solitaire has the property that a move can never increase the total surplus:  $d^+_\text{tot}(\lambda) \ge d^+_\text{tot}(\lambda^\text{new})$ will always hold. This property generalizes to well-behaved $\sigma$-Bulgarian solitaire. To prove this we shall extend the geometric interpretation of the solitaire by marking some cards as \emph{plus-cards} or \emph{minus-cards} with special properties. 

Given a reference stable configuration $\lambda^{\ast}$ of $n^\ast$ cards, any configuration $\lambda$ of $n$ cards can be transformed to a marked version of $\lambda$ in the following two steps: 
\begin{enumerate}
\item \emph{Marking plus-cards}: For any pile index $i$ such that $\lambda_i>\lambda^{\ast}_i$ (i.e., such that the pile has a surplus $d^+_i=\lambda_i-\lambda^{\ast}_i>0$), mark the $d^+_i$ top cards of the pile as plus-cards. 
\item \emph{Creating minus-cards}: For any pile index $i$ such that $\lambda_i<\lambda^{\ast}_i$ (i.e., such that the pile has a deficit $d^-_i=\lambda^{\ast}_i-\lambda_i>0$), add an additional $d^-_i$ minus-cards to the top of the pile.
\end{enumerate}

\begin{observation}\label{obs:marked} 
In the marked version of $\lambda$ the plus-cards and unmarked cards together make up $\lambda$, while the minus-cards and unmarked cards together make up $\lambda^\ast$. It follows that the number of plus-cards equals the total surplus and the number of minus-cards equals the total deficit.
\end{observation}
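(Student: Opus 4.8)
The plan is to verify both assertions by a direct case analysis on each pile index $i$, sorted according to the sign of $d_i=\lambda_i-\lambda^\ast_i$; the count of plus-cards and minus-cards then drops out immediately from the definitions of $d^+_\text{tot}$ and $d^-_\text{tot}$.

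First I would fix a pile index $i$ and bookkeep, within the $i$th pile of the marked diagram, three quantities: the number of plus-cards, the number of minus-cards, and the number of unmarked cards. If $\lambda_i>\lambda^\ast_i$, then Step~1 of the marking procedure marks the top $d^+_i=\lambda_i-\lambda^\ast_i$ cards as plus-cards, leaving $\lambda_i-d^+_i=\lambda^\ast_i$ unmarked cards and no minus-cards; hence (plus-cards) $+$ (unmarked) $=\lambda_i$ and (minus-cards) $+$ (unmarked) $=\lambda^\ast_i$ for this pile. If $\lambda_i<\lambda^\ast_i$, then Step~2 stacks $d^-_i=\lambda^\ast_i-\lambda_i$ minus-cards on top of the $\lambda_i$ original (unmarked) cards and creates no plus-cards, so again (plus-cards) $+$ (unmarked) $=\lambda_i$ and (minus-cards) $+$ (unmarked) $=\lambda_i+d^-_i=\lambda^\ast_i$. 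If $\lambda_i=\lambda^\ast_i$, neither step touches the pile and all three identities are trivial. Note in particular that no pile ever carries both plus-cards and minus-cards, since a pile has a surplus or a deficit but not both.

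Collecting these pilewise identities over all $i$ shows that the plus-cards together with the unmarked cards form a diagram whose $i$th column has height $\lambda_i$, i.e. they make up $\lambda$, and the minus-cards together with the unmarked cards make up $\lambda^\ast$. Finally, the total number of plus-cards is $\sum_i d^+_i=d^+_\text{tot}(\lambda)$, the total surplus, and the total number of minus-cards is $\sum_i d^-_i=d^-_\text{tot}(\lambda)$, the total deficit.

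I do not expect a genuine obstacle: the statement is essentially a reformulation of the marking construction itself. The only point requiring care is to treat the three sign cases uniformly and to remember that in Step~2 the newly added minus-cards raise the height of the $i$th marked pile to exactly $\lambda^\ast_i$, which is what makes the second identity work in the deficit case.
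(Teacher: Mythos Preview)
Your proposal is correct and matches the paper's treatment: the paper states this as an observation with no accompanying proof, regarding it as immediate from the two-step marking construction, and your pilewise case analysis on the sign of $d_i$ simply spells out that immediacy.
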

 
Recall the geometric interpretation of well-behaved $\sigma$-Bulgarian solitaire described in section~\ref{well-behaved-interpretation}: Remove the cards in layers number $H_1=1, H_2, H_3, \dots$ (counted from the bottom) and let these cards form a new first pile, then sort the configuration by left-shifting the cards in every layer. The rules of play on marked configurations follow this interpretation, with the following special rules for marked cards.

\emph{Cancellation within the new pile}: When a new first pile is formed, all minus-cards and plus-cards in this pile float to the top of the pile. If there are both minus-cards and plus-cards in the pile then cancellation occurs: Repeatedly replace one minus-card and  one plus-card by a single unmarked card until either there are no more minus-cards or there are no more plus-cards in the pile.

\emph{Cancellation within a layer}: Left-shifting layers clearly corresponds to minus-cards in the first pile swapping places with any cards to the right in the same layer. If at any point a minus-card that started in the first pile swaps places with a plus-card, replace the minus-card and the plus-card by a single unmarked card; then continue left-shifting as usual.

We shall refer to this process as the \emph{marked} $\sigma$-Bulgarian solitaire. 

\begin{lemma}\label{lm:marked}
If a well-behaved $\sigma$-Bulgarian solitaire from $\lambda$ is played in parallel with the marked $\sigma$-Bulgarian solitaire from the marked version of $\lambda$, Observation~\ref{obs:marked} will continue to hold.
\end{lemma}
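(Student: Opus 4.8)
The plan is to run the marked $\sigma$-Bulgarian solitaire in parallel not only with the ordinary $\sigma$-solitaire from $\lambda$ but also with the ordinary $\sigma$-solitaire from the reference configuration $\lambda^\ast$. Write $\pi^{+}$ for the operation on marked configurations that deletes the minus-cards, keeps the plus-cards and unmarked cards, lets them settle, and re-sorts the columns into a partition, and define $\pi^{-}$ symmetrically (delete plus-cards instead). Then Observation~\ref{obs:marked} for a marked configuration $M$ is exactly the pair of identities $\pi^{+}(M)=\lambda$ and $\pi^{-}(M)=\lambda^\ast$, where $\lambda$ is the configuration currently represented. Since $\lambda^\ast$ is stable, Lemma~\ref{lm:stable} gives $f_\sigma(\lambda^\ast)=\lambda^\ast$, so by induction on the number of moves it suffices to show that one move of the marked solitaire commutes with both projections: if $M$ becomes $M'$ then $\pi^{+}(M')=f_\sigma(\pi^{+}(M))$ and $\pi^{-}(M')=f_\sigma(\pi^{-}(M))$.

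To prove this I would strengthen the inductive hypothesis so that $M$ is always literally the marked version (with reference $\lambda^\ast$) of the configuration it represents; equivalently, column $i$ of $M$ has apparent height $\max(\lambda_i,\lambda^\ast_i)$, with its plus-cards at the heights in $(\lambda^\ast_i,\lambda_i]$ and its minus-cards at the heights in $(\lambda_i,\lambda^\ast_i]$. Under this hypothesis the first three stages of a move are routine. When the picked layers $H_1=1,H_2,H_3,\dots$ are removed, a card removed from column $i$ at height $H_j$ is a minus-card exactly when $\lambda_i<H_j\le\lambda^\ast_i$, in which case $\pi^{+}$ discards it and column $i$ of $\lambda$ never reached height $H_j$, so removal commutes with $\pi^{+}$, and symmetrically with $\pi^{-}$. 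The surviving layers then drop to $\bar\sigma(h)$ in all three games simultaneously, and since $\sigma,\bar\sigma$ are non-decreasing the unmarked cards stay at the bottom of each column, maintaining the nice form. Finally, when the removed cards are stacked into the new pile, the marked cards float to the top and cancel, and a count shows that replacing a plus-card and a minus-card by an unmarked card changes neither the number of $\pi^{+}$-visible nor the number of $\pi^{-}$-visible cards in that pile, so the new pile ends in nice form with $\pi^{+}$-height $\sum_i\sigma(\lambda_i)$ (the new-pile size in the game from $\lambda$, by Observation~\ref{obs:interpretation}) and $\pi^{-}$-height $\sum_i\sigma(\lambda^\ast_i)=\lambda^\ast_1$ (telescoping via Lemma~\ref{lm:stable}, the new-pile size in the game from $\lambda^\ast$).

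The substantive stage is the re-sorting by left-shifting with layer-cancellation. After the previous stage only the new pile can be out of place (the dropped old piles are weakly decreasing by convexity), so re-sorting moves it rightward past the taller old piles while those shift one step left; the only cards that cross one another are the minus-cards at the top of the new pile and the plus-cards at the top of the old surplus piles, and the layer-cancellation rule fires exactly when such a pair meets. I would analyse this one layer $h$ at a time, using convexity to know which old piles reach layer $h$ and the stability of $\lambda^\ast$ (Lemma~\ref{lm:stable}) to pin the old piles' plus-cards at heights $\bar\sigma(\lambda^\ast_i)=\lambda^\ast_{i+1}$, and argue that each new-pile minus-card in layer $h$ either meets exactly one plus-card — in which case the rule leaves an unmarked card in that plus-card's former slot, which is simultaneously the slot $\pi^{+}$ needs filled (it held a $\lambda$-card) and the slot into which the $\lambda^\ast$-game's own left-shift carries the corresponding $\lambda^\ast$-card — or comes to rest to the right of every plus-card in its layer, where the plain left-shift already agrees with both projected games.

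The hard part will be precisely this last stage: the claim that, as it is shifted rightward inside a fixed layer, a new-pile minus-card meets at most one plus-card, together with the bookkeeping of which slot the resulting unmarked card occupies and why it is correct for both projections at once. I expect the cleanest route is to avoid modelling the left-shift as a sequence of adjacent transpositions and instead to compute, for each layer $h$, the multiset of card colours present in layer $h$ of $M'$, of $f_\sigma(\lambda)$, and of $f_\sigma(\lambda^\ast)$, and to verify the two identities relating their $\pi^{+}$- and $\pi^{-}$-visible sizes; because a partition is determined by its layer sizes and $\pi^{\pm}$ re-sorts at the end, this suffices, and every cancellation trades one visible plus-card plus one invisible minus-card for one visible unmarked card in the same layer (dually for $\pi^{-}$), so it leaves those sizes untouched and the identities reduce to the elementary counting already carried out in the first three stages. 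The nice-form hypothesis, together with Lemma~\ref{lm:stable}, makes every one of these layer multisets explicit, which is what ultimately turns the delicate slot-tracking into a finite check.
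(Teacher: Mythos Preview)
Your core idea is the paper's: follow the two projections $\pi^{+}$ (plus-and-unmarked) and $\pi^{-}$ (minus-and-unmarked) through a marked move and observe that cancellation is invisible to both. The paper does this in three sentences: restricting the marked move to minus-and-unmarked cards is just a $\sigma$-move on $\lambda^{\ast}$ (hence returns $\lambda^{\ast}$), restricting to plus-and-unmarked is a $\sigma$-move on $\lambda$ (hence gives $\lambda^{\text{new}}$), and each cancellation replaces one plus-card and one minus-card by one unmarked card, so neither restricted configuration changes. Your final layer-counting paragraph is exactly this last observation.

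Where your write-up departs from the paper is the strengthened ``nice form'' invariant, and here there is a structural gap. You use nice form to run the analysis of stages~1--3 (for instance, to know that the card at height $H_j$ in column $i$ is a minus-card precisely when $\lambda_i<H_j\le\lambda^{\ast}_i$), but at stage~4 you switch to counting cards per layer, which yields only the projection identities as \emph{partitions}, not the positional statement that $M'$ is again the marked version of $\lambda^{\text{new}}$. So the strengthened hypothesis is not re-established, and your stages~1--3 argument no longer applies on the second and later moves. In fairness, the paper's brief proof also silently relies on the positional structure (the statement ``restrict attention to the movement of the minus-cards and unmarked cards \dots\ these cards will still make up $\lambda^{\ast}$ after a move'' needs the minus-and-unmarked cards to actually sit in the cells of the Young diagram of $\lambda^{\ast}$, not merely to number $n^{\ast}$). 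Your instinct to make this explicit is sound; what is missing is the verification that the left-shift-with-cancellation step sends the marked version of $\lambda$ to the marked version of $\lambda^{\text{new}}$. Your own slot-tracking sketch in the penultimate paragraph is the right way to do this, and once carried through it closes both your induction and the paper's.
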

\begin{proof}
First restrict attention to the movement of the minus-cards and unmarked cards. Together they make up the stable configuration $\lambda^\ast$ so, precisely because this is a stable configuration, these cards will still make up $\lambda^\ast$ after a move. Now restrict attention to the movement of plus-cards and unmarked cards. Together they make up $\lambda$ so clearly after one move they will make up $\lambda^\text{new}$. Finally, different types of cards interact only in the form of cancellations within the new pile or within a layer. In both forms of cancellation, the configuration made up of minus-cards and unmarked cards together is unchanged as in effect a minus-card is replaced by an unmarked card. The same goes for the configuration made up of plus-cards and unmarked cards together.
\end{proof}

From Lemma~\ref{lm:marked} the desired result follows immediately.

\begin{theorem}\label{thm:total-surplus}
Total surplus and total deficit may decrease but never increase during play of a well-behaved $\sigma$-solitaire.
\end{theorem}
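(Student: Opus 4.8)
The plan is to read the statement off directly from the marked $\sigma$-Bulgarian solitaire together with the invariance established in Lemma~\ref{lm:marked}. First I would observe that a single move of the marked process affects marked cards in only two ways: cards (marked or not) are relocated when the layers $H_1=1,H_2,H_3,\dots$ are picked and when the resulting layers are left-shifted; and occasionally a plus-card and a minus-card are merged into a single unmarked card by one of the two cancellation rules. Relocation never changes whether a card is a plus-card, a minus-card, or unmarked, and each cancellation removes exactly one plus-card and one minus-card. Hence the number of plus-cards after a move is at most the number before the move, and likewise for the number of minus-cards.

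Second, I would combine this with Observation~\ref{obs:marked}, which by Lemma~\ref{lm:marked} continues to hold after the move: since $\lambda^{\ast}$ is a stable configuration it is left invariant by a move, so it remains a legitimate reference, and in the marked version of $\lambda^\text{new}$ the number of plus-cards equals $d^+_\text{tot}(\lambda^\text{new})$ while the number of minus-cards equals $d^-_\text{tot}(\lambda^\text{new})$. Therefore $d^+_\text{tot}(\lambda^\text{new})\le d^+_\text{tot}(\lambda)$ and $d^-_\text{tot}(\lambda^\text{new})\le d^-_\text{tot}(\lambda)$, which is exactly the claim; the totals strictly decrease precisely when some cancellation is triggered, so the word ``may'' in the statement is witnessed by any play in which a plus-card meets a minus-card. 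As a consistency check, each cancellation lowers both totals by the same amount, in agreement with the invariance $d^+_\text{tot}-d^-_\text{tot}=n-n^{\ast}$ noted in Section~\ref{sec:surplus}.

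There is essentially no remaining obstacle once Lemma~\ref{lm:marked} is in hand; the only point that needs care is the bookkeeping that relocation during layer-picking and left-shifting genuinely preserves the mark of every card, and in particular that the ``cancellation within a layer'' rule fires only when a minus-card that originated in the newly formed first pile swaps places with a plus-card, so that this rule too only ever destroys marked pairs and never creates marked cards. All of this is already built into the definition of the marked $\sigma$-Bulgarian solitaire, so the proof is a one-line deduction rather than a fresh argument.
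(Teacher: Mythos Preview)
Your proposal is correct and follows essentially the same approach as the paper: invoke Lemma~\ref{lm:marked} together with Observation~\ref{obs:marked} to identify total surplus and deficit with the numbers of plus- and minus-cards, then note that the rules of the marked solitaire can only remove marked cards (via cancellation) and never create them. Your write-up is simply a more explicit version of the paper's one-sentence deduction.
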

\begin{proof}
From Lemma~\ref{lm:marked} and the rules of the marked $\sigma$-Bulgarian solitaire it follows that the total surplus equals the number of plus-cards and the total deficit equals the number of minus-cards, which decrease when cancellations occur but never increase.
\end{proof}

In order to prove Conjecture~\ref{conj:stable-limit-is-limit} we now only need to prove that whenever the deviation from the closest stable configuration is large, a cancellation of a plus-card against a minus-card must eventually occur. 

A reason why it is likely that a cancellation will eventually occur is that when plus-cards start over from the first pile they obviously do so from a higher level than minus-cards. It will therefore generally take a greater number of moves for a plus-card than for a minus-card from when it starts over until it reaches the level of a picking layer where it starts over again. Thus minus-cards have shorter periods than plus-cards and should eventually catch up with them. For ordinary Bulgarian solitaire, which has only one picking layer, this argument is sufficient to prove that cancellations must occur until no more minus-cards remain. In general, however, well-behaved $\sigma$-solitaire has several picking layers and this makes it difficult to perform a rigorous study of periods. However, we still believe that it may be possible to overcome these technical difficulties to achieve a proof of Conjecture~\ref{conj:stable-limit-is-limit}.

\section{Limit shapes of stable configurations of $q_n$-proportion Bulgarian solitaire}\label{sec:q-stable}
In order to calculate explicit limit shapes we make a canonical choice of a well-behaved $\sigma$, namely $\sigma(h)=\lceil q h\rceil$ for $q\in (0,1]$. (It should be obvious that this function satisfies the conditions for being well-behaved, see Definition~\ref{def:well-behaved}.) In words, this form of $\sigma$ defines a solitaire in which from each pile we pick a number of cards given by the proportion $q$ of the pile size, rounded upward to the closest integer. We will refer to this solitaire as \emph{$q$-proportion Bulgarian solitaire}. Following the geometric interpretation of well-behaved $\sigma$-Bulgarian solitaire, the cards picked in $q$-proportion Bulgarian solitaire can be seen as layers number $H_{1}, H_{2}, H_{3}, \dots$, where for any $i>0$ we have $H_{i} = \min\{h : \lceil q_n h\rceil=i\}$. Thus, $q$-proportion Bulgarian solitaire is a well-behaved $\sigma$-Bulgarian solitaire in which the picked layers are approximately equidistant.

We may let the choice of $q$ depend on $n$, in which we write $q_n$. Note that for $q_n \le 1/n$ only one card is picked in any pile.  Thus by choosing $q_n\le 1/n$ we obtain ordinary Bulgarian solitaire. 

Thanks to Lemma~\ref{lm:stable} we can determine a unique stable configuration of a $q$-proportion solitaire by choosing the size of the largest part and then obtain the other parts by repeatedly applying the function $\bar\sigma(h) = h - \lceil q h\rceil$. This makes it easy to determine the limit shapes of stable configurations. Specifically, we identify three different regimes defined by the asymptotic behavior of $n q_n^2$. 

First, in case $n q_n^2$ tends to zero as $n$ tends to infinity, stable configurations have a triangular limit shape. This is a direct generalization of the limit shape result for the ordinary Bulgarian solitaire. 

The second regime is when $n q_n^2$ tends to infinity, in which case an exponential limit shape is obtained. The borderline regime when $n q_n^2$ tends to a constant $C$ yields an infinite family of limit shapes (parameterized by $C$), which interpolate between the triangular shape of the first regime and the exponential shape of the second regime.

A move of $q$-proportion Bulgarian solitaire involves rounding the number of picked cards in each pile to integers. The three regimes differ in how much impact this rounding has on the result. The following couple of lemmas estimate the impact of rounding.

\begin{lemma}\label{lm:new-pile}
After a move in $q$-proportion Bulgarian solitaire from a configuration with $m$ non-empty piles, the size of the new pile is $nq + r$ where $0\le r < m$. 
\end{lemma}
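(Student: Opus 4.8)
The plan is to track, for each of the $m$ non-empty piles of the starting configuration, exactly how many cards it contributes to the new pile, and then sum these contributions. In $q$-proportion Bulgarian solitaire the new pile is formed by picking $\lceil q\lambda_i\rceil$ cards from the $i$th pile, so the total size of the new pile is $\sum_{i=1}^m \lceil q\lambda_i\rceil$, where the sum runs over the $m$ non-empty piles (the empty piles contribute $\lceil q\cdot 0\rceil = 0$ and can be ignored).

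First I would write $\lceil q\lambda_i\rceil = q\lambda_i + \varepsilon_i$, where $\varepsilon_i = \lceil q\lambda_i\rceil - q\lambda_i$ is the rounding error, so that $0\le \varepsilon_i < 1$ for each $i$. Summing over the $m$ non-empty piles gives
\[
\sum_{i=1}^m \lceil q\lambda_i\rceil = q\sum_{i=1}^m \lambda_i + \sum_{i=1}^m \varepsilon_i = qn + r,
\]
where $r := \sum_{i=1}^m \varepsilon_i$, using that $\sum_{i=1}^m \lambda_i = n$ is the total number of cards. Since each $\varepsilon_i$ lies in $[0,1)$ and there are $m$ of them, we have $0\le r < m$, which is exactly the claimed bound. (One can note that $r\ge 0$ already because $\lceil q\lambda_i\rceil \ge q\lambda_i$ termwise; the upper bound is the only place the hypothesis of exactly $m$ non-empty piles is used.)

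This argument is essentially a one-line computation, so there is no real obstacle; the only point requiring a moment's care is making sure the count of summands is $m$ — that is, that only non-empty piles can contribute and that there are exactly $m$ of them — and that the strict inequality $\varepsilon_i < 1$ is genuinely strict, which holds since $q\lambda_i$ being an integer forces $\varepsilon_i = 0$ rather than $1$. Both are immediate, so the lemma follows.
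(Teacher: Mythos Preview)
Your proof is correct and follows essentially the same approach as the paper: both arguments bound each contribution by $q\lambda_i \le \lceil q\lambda_i\rceil < q\lambda_i + 1$ and sum over the $m$ non-empty piles to obtain $nq \le \text{(new pile size)} < nq + m$. Your introduction of the explicit rounding errors $\varepsilon_i$ is just a notational variant of the same idea.
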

\begin{proof}
By the definition of $q$-proportion Bulgarian solitaire, the contribution to the new pile from any old non-empty pile $\lambda_i$ is $\lceil q \lambda_i \rceil$, which is bounded from below and from above by
\[
\lambda_i q \le \lceil q \lambda_i \rceil < \lambda_i q + 1.
\]
The lemma follows from summing over all $m$ non-empty piles.
\end{proof}

\begin{lemma}\label{lm:number-of-piles}
The number of moves in $q$-proportion Bulgarian solitaire until a pile of size $h$ disappears is at most 
\[ \frac{\ln(q h) +1}{q}.
\]
\end{lemma}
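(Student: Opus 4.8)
The plan is to follow a single pile of size $h$ through the play. Put $h_0=h$ and $h_{k+1}=h_k-\lceil qh_k\rceil$, so the pile disappears after exactly $T$ moves, where $T$ is the least index with $h_T=0$. Two elementary observations do most of the work. First, since $\lceil qh_k\rceil\ge qh_k$, every move shrinks the pile by at least the factor $1-q$, so $h_k\le(1-q)^k h$ for all $k$. Second, as soon as $h_k\le 1/q$ we have $\lceil qh_k\rceil=1$, so from then on the pile loses exactly one card per move, and this ``slow'' regime is never left once entered; hence a pile of size at most $1/q$ vanishes within $1/q$ further moves. (I assume $0<q<1$ and $h>1/q$; the remaining cases are immediate, since $q=1$ kills the pile in one move and, under the standing hypothesis $qh\ge1$, the case $h\le1/q$ forces $h=1/q$, giving $T=1/q$, the bound with equality.)

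I would then split the play into a fast phase and a slow phase. Let $T_1$ be the number of moves until the pile first has size at most $1/q$; by the second observation the remaining $h_{T_1}\le 1/q$ moves finish it off, so $T=T_1+h_{T_1}$. To bound $T_1$, note that just before move $T_1$ the pile still has more than $1/q$ cards (otherwise $T_1$ would be smaller), so by the first observation $1/q<h_{T_1-1}\le(1-q)^{T_1-1}h$; taking logarithms and using the standard estimate $-\ln(1-q)\ge q$ gives
\[
T_1-1<\frac{\ln(qh)}{-\ln(1-q)}=:t^{\dagger}\le\frac{\ln(qh)}{q}.
\]
If it happens that $T_1\le t^{\dagger}$, then $T=T_1+h_{T_1}\le t^{\dagger}+1/q\le(\ln(qh)+1)/q$ and we are done.

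The one point that needs genuine care — and the only step I expect to be more than routine — is that $T_1$, being the integer time at which the discrete process first drops below $1/q$, may overshoot $t^{\dagger}$ by some $\varepsilon\in(0,1)$. But then the overshoot is paid for by a correspondingly short slow phase: $h_{T_1}\le(1-q)^{T_1}h=(1-q)^{\varepsilon}\,(1-q)^{t^{\dagger}}h=(1-q)^{\varepsilon}/q$, and convexity of $\varepsilon\mapsto(1-q)^{\varepsilon}$ on $[0,1]$ (its graph lies below the chord through $(0,1)$ and $(1,1-q)$) gives $(1-q)^{\varepsilon}\le 1-q\varepsilon$. Hence
\[
T=T_1+h_{T_1}\le\bigl(t^{\dagger}+\varepsilon\bigr)+\frac{1-q\varepsilon}{q}=t^{\dagger}+\frac1q\le\frac{\ln(qh)+1}{q},
\]
the same bound. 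Apart from this balancing of the phase boundary against the length of the linear tail, everything reduces to the geometric-decay estimate $h_k\le(1-q)^kh$ and the inequality $-\ln(1-q)\ge q$.
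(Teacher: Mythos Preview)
Your proof is correct and follows the same two-phase decomposition as the paper: geometric decay until the pile size drops to $1/q$, followed by a linear tail losing one card per move, with the fast phase bounded via $-\ln(1-q)\ge q$. The paper's argument is in fact looser than yours---it simply asserts that the fast phase takes at most $\ln(qh)/q$ moves and the slow phase $1/q$ moves, without addressing the integer overshoot at the phase boundary; your convexity step balancing the overshoot $\varepsilon$ against the shortened tail is a genuine tightening of what the paper leaves implicit.
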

\begin{proof}
In each move a pile decreases at least by a factor of $(1-q)$. A pile starting at size $h$ will have gone down to size $1/q$ after at most $\ln(1/(q h))/\ln(1-q)$ moves, which (using the MacLaurin expansion of the denominator) in turn is bounded by $\ln(q h)/q$. From size $1/q$ and onward the pile will, due to rounding, lose exactly $1$ card per move for $1/q$ moves at which point the pile has disappeared. 
\end{proof}

We shall now derive the limit shape of stable configurations depending on the asymptotic behavior of $n q_n^2$.  See Figure~\ref{fig:shapes}.
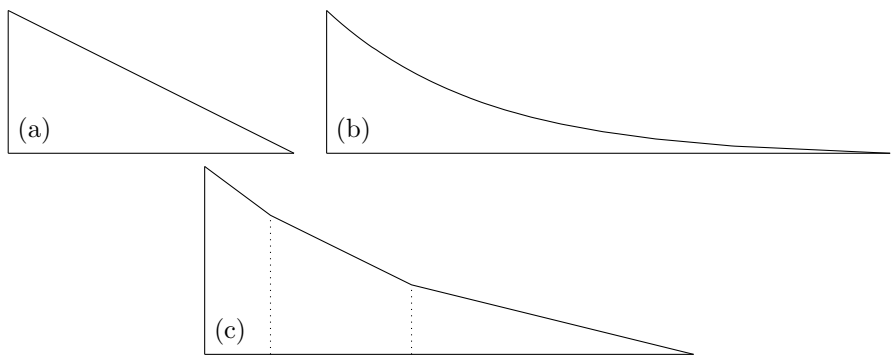
\begin{figure}[ht]
\centering
\begin{tabular}{cc}
\begin{tikzpicture}[scale=1.9]
\draw (0,0) -- (2,0);
\draw (0,0) -- (0,1);
\draw (0,1) -- (2,0);
\node [above right] at (0,0) {(a)};
%\node [above right] at (0,0) {$nq_n^2p_n\to 0$};
\end{tikzpicture}
&
\begin{tikzpicture}[scale=1.9]
% Generated by n=6e5;p=0.3;q=0.01;nSteps=1e4; parts=pfracqcand_sim(n,p,q,nSteps);
\draw (0,0) -- (3.9429,0);
\draw (0,0) -- (0,1);
\draw (0,1) -- 
(0.026396,0.97591) -- 
(0.052792,0.95181) -- 
(0.079188,0.92878) -- 
(0.10558,0.90589) -- 
(0.13198,0.88407) -- 
(0.15838,0.86225) -- 
(0.18477,0.84149) -- 
(0.21117,0.82073) -- 
(0.23756,0.80058) -- 
(0.26396,0.78118) -- 
(0.29036,0.76178) -- 
(0.31675,0.74329) -- 
(0.34645,0.72516) -- 
(0.37284,0.70708) -- 
(0.39924,0.68995) -- 
(0.42564,0.67298) -- 
(0.45203,0.65601) -- 
(0.47843,0.64025) -- 
(0.50482,0.62449) -- 
(0.53122,0.60873) -- 
(0.55762,0.59388) -- 
(0.58401,0.57943) -- 
(0.61041,0.56473) -- 
(0.6368,0.55054) -- 
(0.6632,0.5372) -- 
(0.6896,0.52402) -- 
(0.71599,0.51053) -- 
(0.74239,0.4978) -- 
(0.76878,0.48563) -- 
(0.79518,0.47356) -- 
(0.82158,0.46143) -- 
(0.84797,0.44977) -- 
(0.87437,0.43885) -- 
(0.90076,0.42794) -- 
(0.92716,0.41703) -- 
(0.95356,0.40612) -- 
(0.97995,0.39612) -- 
(1.0063,0.38521) -- 
(1.0327,0.37551) -- 
(1.0591,0.36581) -- 
(1.0855,0.35611) -- 
(1.1119,0.34717) -- 
(1.1383,0.33869) -- 
(1.1647,0.33025) -- 
(1.1911,0.32172) -- 
(1.2175,0.31323) -- 
(1.2439,0.30474) -- 
(1.2703,0.29717) -- 
(1.2967,0.28989) -- 
(1.3231,0.28277) -- 
(1.3495,0.27529) -- 
(1.3759,0.26807) -- 
(1.4023,0.2608) -- 
(1.4287,0.25352) -- 
(1.4551,0.24716) -- 
(1.4815,0.2411) -- 
(1.5079,0.23504) -- 
(1.5343,0.22913) -- 
(1.5607,0.22291) -- 
(1.5871,0.21685) -- 
(1.6135,0.21079) -- 
(1.6399,0.20488) -- 
(1.6695,0.19922) -- 
(1.6959,0.19442) -- 
(1.7223,0.18957) -- 
(1.7487,0.18472) -- 
(1.7751,0.17988) -- 
(1.8015,0.17503) -- 
(1.8279,0.17018) -- 
(1.8543,0.16533) -- 
(1.8807,0.16048) -- 
(1.9071,0.15563) -- 
(1.9335,0.15093) -- 
(1.9599,0.1473) -- 
(1.9863,0.14366) -- 
(2.0127,0.14002) -- 
(2.0391,0.13654) -- 
(2.0655,0.13275) -- 
(2.0919,0.12911) -- 
(2.1183,0.12547) -- 
(2.1447,0.12184) -- 
(2.1711,0.1182) -- 
(2.1975,0.11456) -- 
(2.2239,0.11093) -- 
(2.2503,0.10729) -- 
(2.2767,0.10365) -- 
(2.3031,0.10002) -- 
(2.3294,0.097591) -- 
(2.3558,0.095166) -- 
(2.3822,0.092741) -- 
(2.4086,0.090317) -- 
(2.435,0.087892) -- 
(2.4614,0.085467) -- 
(2.4878,0.083043) -- 
(2.5142,0.080618) -- 
(2.5406,0.078143) -- 
(2.567,0.075769) -- 
(2.5934,0.073344) -- 
(2.6198,0.07092) -- 
(2.6462,0.068495) -- 
(2.6726,0.066071) -- 
(2.699,0.063646) -- 
(2.7254,0.061221) -- 
(2.7518,0.058797) -- 
(2.7782,0.056372) -- 
(2.8046,0.053948) -- 
(2.831,0.051523) -- 
(2.8574,0.049705) -- 
(2.8838,0.048492) -- 
(2.9102,0.04728) -- 
(2.9366,0.046068) -- 
(2.9663,0.044855) -- 
(2.9926,0.043643) -- 
(3.019,0.042431) -- 
(3.0454,0.041218) -- 
(3.0718,0.040006) -- 
(3.0982,0.038794) -- 
(3.1246,0.037632) -- 
(3.151,0.036369) -- 
(3.1774,0.035157) -- 
(3.2038,0.033945) -- 
(3.2302,0.032732) -- 
(3.2566,0.03152) -- 
(3.283,0.030308) -- 
(3.3094,0.029095) -- 
(3.3358,0.027883) -- 
(3.3622,0.026671) -- 
(3.3886,0.025458) -- 
(3.415,0.024297) -- 
(3.4414,0.023034) -- 
(3.4678,0.021821) -- 
(3.4942,0.020609) -- 
(3.5206,0.019397) -- 
(3.547,0.018185) -- 
(3.5734,0.016972) -- 
(3.5998,0.01576) -- 
(3.6262,0.014396) -- 
(3.6525,0.013184) -- 
(3.6789,0.011972) -- 
(3.7053,0.010759) -- 
(3.7317,0.0095469) -- 
(3.7581,0.0083346) -- 
(3.7845,0.0071223) -- 
(3.8109,0.00591) -- 
(3.8373,0.0046977) -- 
(3.8637,0.0034854) -- 
(3.8901,0.0022731) -- 
(3.9165,0.0010608) -- 
(3.9429,0);
\node [above right] at (0,0) {(b)};
%\node [above right] at (0,0) {$nq_n^2p_n\to\infty$};
\end{tikzpicture}
\end{tabular}
\begin{tikzpicture}[scale=2.5]
% Generated by n=6e4;p=0.3;q=0.01;nSteps=1e4; parts=pfracqcand_sim(n,p,q,nSteps);
\draw (0,0) -- (2.6,0);
\draw (0,0) -- (0,1);
\draw (0,1) -- (0.35,0.74) -- (1.1,0.37) -- (2.6,0);
\node [above right] at (0,0) {(c)};
%\node [above right] at (0,0) {$nq_n^2p_n\to C$};
\draw[dotted] (0.35,0) -- (0.35,0.74);
\draw[dotted] (1.1,0) -- (1.1,0.37);
\end{tikzpicture}
\caption{The three cases of limit shapes in Theorem~\ref{thm:stable-fractional}: (a) triangular, (b) exponential, and (c) interpolating with $J$ linear sections, here illustrated for $J=3$.\label{fig:shapes}}
\end{figure}

First, recall from Section~\ref{sec:downscaling} that the scaling factor we employ is $a_n = n/\lambda_1 = \frac{1}{q_n}$. Thus, if $q_n$ is bounded away from zero, then the scaling $\frac{1}{q_n}$ is bounded and hence cannot transform the jumpy boundary diagrams into a smooth limit shape. Therefore we shall require
\[
q_n \to 0 \text{ as }n\to\infty.
\]

\begin{theorem}\label{thm:stable-fractional}
There are three cases for limit shapes of stable configurations of $q$-proportion Bulgarian solitaire, depending on the asymptotic behavior of $n q_n^2$:
\begin{itemize}
\item[(a)] In case $n q_n^2\rightarrow 0$, there is a triangular limit shape. Under the standard scaling we apply, by which the height of the diagram is scaled to 1, the downward slope of the triangle will be $1/2$. 
\item[(b)] In case $n q_n^2\rightarrow \infty$, there is an exponential limit shape. 
\item[(c)] Interpolating between the two previous cases is the case $n q_n^2\rightarrow C>0$. Define $z>0$ by the equation 
\[
2C = \frac{z^2+\lceil z \rceil^2}{\lceil z \rceil} - \sum_{i=0}^{\lceil z \rceil-1}\frac{1}{\lceil z \rceil-i}
\]
and set $A_0 = \frac{z}{C}\frac{1+z-\lceil z \rceil}{\lceil z \rceil}$ and
$A_k = \frac{z}{C}\frac{1}{\lceil z \rceil-k}$ for $1\le k\le \ceilz - 1$.
%
%The limit shape approximates the exponential shape using $J=\lceil z\rceil$ linear sections such that the first section has width $z(z-\lceil z\rceil+1)/(JC)$ and every subsequent section, numbered $i=2,3,\dots,J$, has width $z/((J+1-i)C)$. The slope of the $i$th section is $(J+1-i)C/z^2$ for all $i=1,2,\dots,J$.
%
The limit shape approximates the exponential shape using $Z:=\lceil z\rceil$ linear sections such that the first section has width $A_0$ and every subsequent section, numbered $k=1,2,\dots,Z-1$, has width $A_k$. The slope of the $i$th section is $\frac{C(Z-i)}{z^2}$ for all $i=0,1,\dots,Z-1$.
\end{itemize}
\end{theorem}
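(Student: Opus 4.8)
The plan is to use Lemma~\ref{lm:stable} to describe each stable configuration explicitly: it is the partition $\lambda^{(n^\ast)}$ obtained by choosing $\lambda_1$ as large as possible subject to $\sum_i\lambda_i\le n$ and then iterating $\lambda_{i+1}=\lambda_i-\lceil q_n\lambda_i\rceil$. By Theorem~\ref{thm:q-unique} (and its proof) the total number of cards is strictly increasing in the choice of $\lambda_1$, so this maximal choice is well defined; by Corollary~\ref{cor:sigma-nextstable} together with Lemma~\ref{lm:number-of-piles} one has $n-n^\ast\le\ell(\lambda^{(n^\ast)})+1=O(1/q_n)$ in regimes (b) and (c) and $O(\sqrt n)$ in regime (a), hence $n^\ast/n\to1$, so the downscaled diagram of $\lambda^{(n^\ast)}$ has area tending to $1$ and it suffices to analyze the trajectory of the recursion. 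Throughout, the scaling factor is $a_n=n/\lambda_1$ and $v_n:=q_n\lambda_1$.

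The core of the argument is a \emph{band analysis} of the recursion. Partition the possible pile sizes into bands $B_j=\bigl((j-1)/q_n,\,j/q_n\bigr]$; while a pile size lies in $B_j$ the recursion subtracts exactly $j$ at each step, so within each band the diagram boundary is an arithmetic progression. Counting the piles falling in each band and summing their sizes gives, for a trajectory started at $\lambda_1=\lceil v/q_n\rceil$, the estimate $\sum_i\lambda_i=\frac1{q_n^2}\bigl(g(v)+o(1)\bigr)$ uniformly for $v$ in compact sets, where $g(v)=\tfrac12\bigl(\tfrac{v^2+\lceil v\rceil^2}{\lceil v\rceil}-\sum_{j=1}^{\lceil v\rceil}\tfrac1j\bigr)$; one checks that $g$ is a continuous strictly increasing bijection of $(0,\infty)$ with $g(0^+)=0$, $g(v)=v^2/2$ for $v\le1$, and $g(v)\sim v$ as $v\to\infty$. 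The defining equation of $z$ in part (c) is precisely $g(z)=C$. Combining the area estimate with $n^\ast/n\to1$ then forces $v_n\to g^{-1}(C)$: in regime (c) this is the finite number $z$, in regime (a) ($C=0$) it is $0$, and in regime (b) ($C=\infty$) it is $\infty$.

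From here each case is a direct computation. In case (a), $v_n\to0$ means $\lceil q_n\lambda_i\rceil=1$ for every pile, so $\lambda^{(n^\ast)}$ is the staircase $(\lambda_1,\lambda_1-1,\dots,1)$ with $\lambda_1(\lambda_1+1)/2\le n<(\lambda_1+1)(\lambda_1+2)/2$, whence $n/\lambda_1^2\to1/2$ and $\partial\lambda^{(n^\ast)}(x)=1-\lfloor xa_n\rfloor/\lambda_1\to1-x/2$; the limit shape is the triangle of downward slope $1/2$. In case (b), the crude two-sided bound $\lambda_1(1-q_n)^{i-1}-1/q_n\le\lambda_i\le\lambda_1(1-q_n)^{i-1}$, together with $v_n\to\infty$ and $a_nq_n=nq_n/\lambda_1\to1$ (the latter because $\sum_i\lambda_i\sim\lambda_1/q_n$ and $\sum_i\lambda_i\sim n$), yields $\partial\lambda^{(n^\ast)}(x)\to e^{-x}$. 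In case (c) the band analysis also pins down the shape: as $v_n\to z$ the trajectory traverses the truncated top band $B_{\lceil z\rceil}$ and then $B_{\lceil z\rceil-1},\dots,B_1$, and reading off the pile count and the sizes in each band and rescaling by $a_n$ (with $a_nq_n\to C/z$) shows the diagram converges to the piecewise linear curve starting at height $1$ whose $i$-th section, $i=0,1,\dots,\lceil z\rceil-1$, has horizontal width $A_i$ and downward slope $C(\lceil z\rceil-i)/z^2$; a final bookkeeping check confirms this curve reaches $0$ after $\lceil z\rceil$ sections and encloses area $1$.

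The main obstacle I expect is the band-counting estimate and its error control: one must verify that the contributions of the partial bands at both ends of each band, of the truncated top band, and of the $O(1/q_n)$ pile-indices near band boundaries are negligible both on the $1/q_n^2$ scale that governs the area and on the $a_n$ scale that governs the shape. This is routine but delicate, and it is the place where the hypothesis $q_n\to0$ (forcing $a_n\to\infty$, so that the jumpy boundary diagrams smooth out) enters; the remainder is bookkeeping with the elementary estimates already supplied by Lemmas~\ref{lm:new-pile} and~\ref{lm:number-of-piles}.
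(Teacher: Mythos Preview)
Your proposal is correct and follows essentially the same approach as the paper: the band decomposition $B_j=\bigl((j-1)/q_n,\,j/q_n\bigr]$ with constant decrement $j$ inside each band is exactly the paper's argument for case~(c), and your treatments of cases~(a) and~(b) match the paper's as well. Your packaging via the single area function $g(v)$ and the inversion $v_n\to g^{-1}(C)$ is a tidier organization of the same computation the paper carries out case by case, but it is not a genuinely different route.
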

\begin{proof}
(a) In case $n q_n^2\rightarrow 0$ we shall see that the effect of rounding dominates in a move from the stable configuration. Specifically, for all sufficiently large $n$ we have $q_n\sqrt{2n}<1$ and hence $\lceil q_n h \rceil  = 1$ for all $0<h<\sqrt{2n}$. Now, assume that the largest pile of a configuration is $\lambda_1 \approx \sqrt{2n}$. Then the size of this pile will decrease by $1$ card per move until after $\lceil \lambda_1\rceil \approx \sqrt{2n}$ moves the pile has disappeared. By Lemma~\ref{lm:stable} the corresponding triangular configuration $\overline{\lambda}$ is stable. The number of cards in this configuration is approximately $n$, which confirms that a stable configuration of $n$ cards (if it exists) will indeed have a largest pile of size  $\approx \sqrt{2n}$. 
Downscaling (where the vertical scaling factor is $1/\lambda_1=1/\sqrt{2n}$ and horizontal scaling factor $\lambda_1/n = \sqrt{2/n}$) yields a boundary diagram $\partial\overline{\lambda}$ where column $k$ has height $1-(k-1)/\sqrt{2n}$, $k=1,2,\dotsc$. Since the width of each column is $\sqrt{2/n}$, the boundary diagram function for $\overline{\lambda}$ is $\partial\overline{\lambda}(x) = 1-\lfloor x/\sqrt{2/n} \rfloor / \sqrt{2n} \to \infty$ as $n\to\infty$. By definition \eqref{eq:stable_limitshape_def}, the proposed limit shape is $y=1-x/2$ for $x\ge 0$.
%Downscaling yields the proposed limit shape. 

(b) In case $n q_n^2\rightarrow \infty$  the effect of rounding turns out to be negligible in a move from the stable configuration. Let $\lambda_1$ denote the largest pile in the stable configuration and let $m$ denote the number of non-empty piles. In view of Lemma~\ref{lm:stable} it follows from sequential application of Lemma~\ref{lm:new-pile} and Lemma~\ref{lm:number-of-piles} that 
\[
\frac{\lambda_1}{n q_n} = 1 + O\left(\frac{m}{n q_n}\right) = 1 + O\left(\frac{\ln(q_n \lambda_1) + 1}{n q_n^2}\right).
\]
Under the assumption $n q_n^2\rightarrow \infty$ it is easy to see that the second term, which estimates the total effect of rounding after downscaling, will tend to zero. With no rounding, pile sizes decrease geometrically with decay factor $1-q_n$. Thus, after downscaling the stable configuration is asymptotically equal to a configuration $\overline{\lambda}$ with a first pile of size 1 and subsequent piles of size $1-q_n, (1-q_n)^2, \dotsc$. Since the horizontal scaling factor is $\lambda_1/n=q_n$, the width of each column in the rescaled boundary diagram $\partial\overline{\lambda}$ is $q_n$. Thus $\partial\overline{\lambda}(x) = (1-q_n)^{\lfloor x/q_n \rfloor}$ for $x\ge 0$. As $n\to\infty$, $q_n\to 0$ and $\partial\overline{\lambda}(x) \to e^{-x}$, proving the proposed limit shape.

(c) For the remaining case, the crucial observation is that the rate by which a pile melts away depends on how the pile size relates to multiples of $1/q_n$. Any pile size can be expressed on the form $y/q_n$ for some $y>0$. From a pile of that size, a move will take away the amount $\lceil y \rceil$. Thus, a pile starting at a size of $z/q_n$ will initially melt away at a slope of $Z=\lceil z \rceil$ per move for
$\Bigl\lceil \frac{1+z-Z}{Zq_n}  \Bigr\rceil$
moves, i.e.\ until the pile size reaches the threshold $(Z-1)/q_n$. At this point the slope decreases to $Z-1$ per move for
$\Bigl\lceil \frac{1}{q_n(Z-1)} \Bigr\rceil$
moves until the pile size reaches the next threshold, $(Z-2)/q_n$, etc. This pattern ends with a section of slope 1 per move for
$\lceil 1/ q_n \rceil$ moves. See Figure~\ref{fig:interpolate}. By Lemma~\ref{lm:stable} this sequence of pile sizes constitutes a stable configuration $\Lambda$. It is elementary, although somewhat tedious, to verify that the total amount of cards $n'$ in this configuration is 
\[
n' = \sum_{i=0}^{Z-1}
\left[ \frac{zB_i}{q_n} - (B_i-1)(Z-i)\Big(\frac{B_i}{2} + \sum_{j=i+1}^{Z-1}B_j\Big)
\right]
%= \frac{C}{ q_n^2},
\]
where $B_0 = \lceil \frac{1+z-Z}{qZ} \rceil$ and $B_k = \lceil \frac{1}{q(Z-k)} \rceil$
for $1\le k\le Z - 1$ are the lengths of the $Z$ sections with constant slope in the configuration $\Lambda$.
Under the assumptions $nq_n^2\to C$ and $q_n\to 0$ as $n\to\infty$, it is straightforward (but again somewhat tedious) to verify that, as $n\to\infty$, we have
\[
q_n^2 n' \to \frac{1}{2} \left( \frac{z^2+Z^2}{Z} - \sum_{i=0}^{Z-1}\frac{1}{Z-i} \right) = C
\]
where the equality comes from using the value of $z$ defined in the theorem. 
The actual total amount of cards that we play with is $n$. 
%
%\[
% = zq \sum_{i=0}^{\ceilz-1}A_i
%-\frac{p^2 q}{2}\left[
%\sum_{i=0}^{\ceilz-1}(A_i-1)(\ceilz-i)\Bigl(A_i+2\sum_{j=i+1}^{\ceilz-1}A_j\Bigr) \right]
%\]
%\[
%n=\frac{z - \frac{1}{2}\left(1+\frac{1}{2}+\frac{1}{3}+\dotsb+\frac{1}{\lceil z\rceil-1}+\frac{(1-z+\lceil z\rceil)(1+z-\lceil z\rceil)}{\lceil z\rceil}\right)}{p_n q_n^2} = \frac{C}{p_n q_n^2}.
%\] 
%
%\[
%\frac{z - \frac{1}{2}\left(1+\frac{1}{2}+\frac{1}{3}+\dotsb+\frac{1}{\lceil %z\rceil-1}+\frac{(2-(z-\lfloor z\rfloor))(z-\lfloor z\rfloor)}{\lfloor %z\rfloor+1}\right)}{p_n q_n^2} = \frac{C}{p_n q_n^2}.
%\] 
Thus, the two amounts of cards $n$ and $n'$ are asymptotically equal under the assumption $n q_n^2\rightarrow C$, in which case the actual stable configuration is asymptotically equal to the stable configuration $\Lambda$ described above.

Let $A_i$ be the length of the $i$th section, $0 \le i \le Z-1$, after downscaling $\Lambda$ with our standard scaling factors (horizontal scaling by $\frac{z}{nq_n}$ and vertical scaling by $\frac{q_n}{z}$). Then
\begin{align*}
A_0 &= \frac{z}{nq_n} \left\lceil \frac{1+z-Z}{q_n Z} \right\rceil \to \frac{z(1+z-Z)}{CZ} \text{ and} \\
A_k &= \frac{z}{nq_n} \left\lceil \frac{1}{q_n(Z-k)} \right\rceil \to \frac{z}{C(Z-k)},\; 1 \le k \le Z-1
\end{align*}
as $n\to\infty$. The proposed slopes of the sections follow immediately. Analogously to the proof in case (a), it follows that the above describes the limit shape.
\end{proof}
\begin{figure}[ht]
\centering
\begin{tikzpicture}[scale=0.15,>=latex']
% Coordinate axes
\draw [thick,<->] (0,43) -- (0,0) -- (73,0);

% Segment 1-2
\draw[thick] (0,40.0) -- (9,30.0);
\draw[thick] (9,30.0) -- (21,20.0);
% Dashed segment
\draw[dashed] (21,20.0) -- (38,10.0);
% Last segment
\draw[thick] (38,10) -- (71.3,0);

% x arrow and label 1
\draw [<->] (0,-2) -- (9,-2);
\node [below] at (4.5,-2)
{$\Bigl\lceil \frac{1+z-Z}{q_n Z} \Bigr\rceil$};
% x arrow and label 2
\draw [<->] (9,-2) -- (21,-2);
\node [below] at (15,-2)
{$\Bigl\lceil \frac{1}{q_n(Z-1)} \Bigr\rceil$};
% x arrow and label last
\draw [<->] (38,-2) -- (71.3,-2);
\node [below] at (54.1,-2) {$\lceil 1/q_n \rceil$};

% y tick and label 0
\draw (-1,0) -- (0,0);
\node [left] at (-1,0) {$0$};
% y tick and label 1
\draw (-1,10) -- (0,10);
\node [left] at (-1,10) {$\frac{1}{q_n}$};
% y tick and label 2
\draw (-1,20) -- (0,20);
\node [left] at (-1,20) {$\frac{Z-2}{q_n}$};
% y tick and label 4
\draw (-1,30) -- (0,30);
\node [left] at (-1,30) {$\frac{Z-1}{q_n}$};
% y tick and label 5
\draw (-1,40) -- (0,40);
\node [left] at (-1,40) {$\frac{z}{q_n}$};

% vertical dotted lines
\draw[dotted] (9,-2) -- (9,30);
\draw[dotted] (21,-2) -- (21,20);
\draw[dotted] (38,-2) -- (38,10);

% horizontal dotted lines
\draw (0,0) -- (0,-1);
\draw[dotted] (0,30) -- (9,30);
\draw[dotted] (0,20) -- (21,20);
\draw[dotted] (0,10) -- (38,10);

% Slopes
\node [right] at (4.5,35) {slope $Z$};
\node [right] at (15,25.4) {slope $Z-1$};
\node [right] at (54.1,5.8) {slope $1$};

\end{tikzpicture}
\caption{The stable configuration $\Lambda$ in case (c) of the proof of Theorem~\ref{thm:stable-fractional}.\label{fig:interpolate}}
\end{figure}
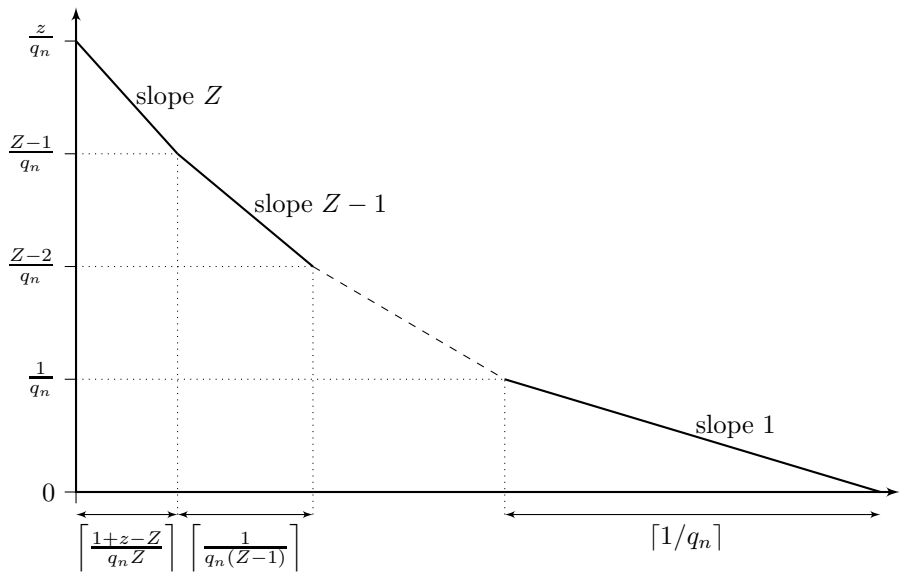

\section{Limit shapes of recurrent configurations of $q_n$-proportion Bulgarian solitaire}\label{sec:q-recurrent}
Although we have not been able to prove Conjecture~\ref{conj:stable-limit-is-limit} for general well-behaved $\sigma$-solitaire, we can prove the conjecture in the special cases of the two main regimes of $q_n$-proportion solitaire.

\begin{lemma}\label{lm:normal}
After $n$ moves of $q_n$-proportion solitaire on $n$ cards there are at most $2\sqrt{n}$ nonempty piles and the largest pile is at most of size $nq+O(\sqrt{n})$. 
\end{lemma}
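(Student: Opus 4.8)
The plan is to follow each pile through its life: a pile is born at some move (by Lemma~\ref{lm:new-pile} with a size at most $nq_n$ plus the current number of piles), it loses at least one card every move afterwards, and by Lemma~\ref{lm:number-of-piles} a pile of size $h$ is gone within $\frac{\ln(q_nh)+1}{q_n}$ moves. Since exactly one pile is created per move, the number of nonempty piles present after $n$ moves is at most the number of moves inside a window whose length bounds the lifetime of any pile still alive at move $n$. Thus everything reduces to a self-referential estimate on the number $m_t$ of piles: if $m_t$ stays at most $2\sqrt n$, then a newborn pile has size at most $nq_n+2\sqrt n$, hence survives at most $L:=\frac{\ln\!\bigl(q_n(nq_n+2\sqrt n)\bigr)+1}{q_n}$ moves, and, writing $w:=nq_n^2$ so that $q_n\sqrt n=\sqrt w$, one has $L=\frac{\ln(w+2\sqrt w)+1}{q_n}=\sqrt n\cdot\frac{\ln(w+2\sqrt w)+1}{\sqrt w}\le 2\sqrt n$, because $w\mapsto\frac{\ln w+2}{\sqrt w}$ attains its maximum value exactly $2$. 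This is the source of the constant $2$ in the statement, and it forces $m_n\le 2\sqrt n$.

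The real difficulty is the transient at the start of the game. The initial configuration can have as many as $n$ piles (all of size $2$, say), so $m_t$ may greatly exceed $2\sqrt n$ for the first few moves, and the piles born during those moves can be atypically large. I would handle this by showing that all such \emph{early} piles have vanished by move $n$, so that the configuration at move $n$ only involves piles born after a short burn-in phase. Concretely: every pile of size $h$ loses at least one card per move, so the original piles (size $\le n$) are gone within $n$ moves; more sharply, by Lemma~\ref{lm:number-of-piles} and the elementary inequality $\ln x+1\le x$ they are gone within $T_0:=\frac{\ln(q_nn)+1}{q_n}\le n$ moves; and a pile born during this phase has size at most $nq_n+n$, hence vanishes within a further $\frac{\ln(2q_nn)+1}{q_n}$ moves, so it is gone by move $2T_0+O(1/q_n)$, which is $<n$ as soon as $q_nn$ is large. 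Past this point I would run an induction on the move index: assuming $m_s\le 2\sqrt n$ throughout the settled phase up to $t$, every pile alive at $t$ was born within the last $L\le 2\sqrt n$ moves, so $m_t\le 2\sqrt n$; taking $t=n$ gives the pile-count bound. The degenerate regime $nq_n^2\to0$, where $q_nn$ need not grow and the burn-in estimate loses its force, is treated separately: there $\lceil q_nh\rceil=1$ for every pile size that occurs, the dynamics is essentially ordinary Bulgarian solitaire, and the classical staircase analysis already gives $\sqrt{2n}+O(1)<2\sqrt n$ nonempty piles after $n$ moves.

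The bound on the largest pile is then immediate: every pile present at move $n$ was born, after the burn-in, at a moment when the pile count was at most $2\sqrt n$, so by Lemma~\ref{lm:new-pile} its birth size was below $nq_n+2\sqrt n$, and it has only shrunk since; hence the largest pile at move $n$ has size below $nq_n+2\sqrt n=nq_n+O(\sqrt n)$. I expect the main obstacle to be making the transient argument uniform over all admissible $(q_n)$ — exhibiting a burn-in length $T$ for which $T$ plus the maximal possible lifetime of a pile born before time $T$ is genuinely below $n$ — and closing the self-referential estimate on $m_t$ with precisely the constant $2$ rather than something larger; both points come down to $\ln x+1\le x$ together with $\max_{w>0}\frac{\ln w+2}{\sqrt w}=2$.
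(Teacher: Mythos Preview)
Your plan works in spirit but is far more intricate than what the paper does, and it has a small quantitative slip. The paper's argument is a one-line pigeonhole observation you seem to have overlooked: since the cards total $n$, at any moment there are at most $\sqrt{n}$ piles of size exceeding $\sqrt{n}$. Every pile loses at least one card per move, so any pile of size at most $\sqrt{n}$ is dead within $\sqrt{n}$ moves. Hence at any time $t\ge\sqrt{n}$, the piles present are at most the $\sqrt{n}$ ``large'' survivors from $\sqrt{n}$ moves earlier together with the $\sqrt{n}$ piles created since, giving $2\sqrt{n}$ in total. The bound on the largest pile then follows from Lemma~\ref{lm:new-pile}. No induction, no burn-in analysis, no case split on $nq_n^2$, and no use of Lemma~\ref{lm:number-of-piles} are needed for the pile count.

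By contrast, your route feeds the pile-count bound through the lifetime estimate of Lemma~\ref{lm:number-of-piles}, which drags $q_n$ into an argument that should be uniform in $q_n$. This is what forces you into the self-referential inequality, the transient analysis, and the regime split. Your calculus claim that $\max_{w>0}(\ln w+2)/\sqrt{w}=2$ is correct, but the quantity you actually need to bound is $\bigl(\ln(w+2\sqrt{w})+1\bigr)/\sqrt{w}$, and at $w=1$ this equals $\ln 3+1\approx 2.10>2$; so the induction does not close with the exact constant $2$ the lemma demands. You could repair this with a slightly larger constant or a sharper lifetime bound, but the simpler fix is to drop the lifetime machinery entirely and use the pigeonhole argument above.
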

\begin{proof}
Every nonempty pile decreases by at least one card in each move. As all pile sizes are bounded by $n$,  all original piles must have died out after $n$ moves. Moreover, because there are $n$ cards in total there are always at most $\sqrt{n}$ piles of size greater than 
$\sqrt{n}$. After $\sqrt{n}$ moves all other piles will have died and $\sqrt{n}$ new piles will have been created, hence there will then be at most $2\sqrt{n}$ nonempty piles. From then on, when new piles are formed they will have size $nq+O(\sqrt{n})$, where the latter term is the contribution from the number of picked cards in each pile being rounded upwards to the closest integer. 
\end{proof}

\subsection{The limit shape of recurrent configurations in the case $q_n^2 n\rightarrow 0$}
In case $q_n^2 n\rightarrow 0$, Lemma~\ref{lm:normal} implies that after $n$ moves the largest pile size is $O(\sqrt{n})$. Then the number of picked cards in each pile is bounded by $\lceil q_n O(\sqrt{n}))\rceil$. This number equals 1 for sufficiently large $n$. From then on the solitaire is therefore equivalent to ordinary Bulgarian solitaire. As the recurrent configurations of ordinary Bulgarian solitaire are known to converge to a triangular limit shape, it follows that the recurrent configurations of $q_n$-proportion Bulgarian solitaire do too in this case.

\subsection{The limit shape of recurrent configurations in the case $q_n^2 n\rightarrow \infty$}
Finally, we shall prove that in the case $q_n^2 n\rightarrow \infty$, the recurrent configurations of $q_n$-proportion solitaire have an exponential limit shape. We do this by showing that regardless of which configuration we start at we must eventually reach configurations that are close to the exponential shape. Our proof works with piles sorted by time of creation rather than by size. Thus, as mathematical objects the configurations are then compositions rather than partitions. However, as we prove in the companion paper \cite{EJS2}, if a sequence of compositions has a decreasing limit shape then the same limit shape is obtained by the corresponding partitions.

%In case $q_n^2 n \rightarrow \infty$,
Lemma~\ref{lm:normal} implies that after $n$ moves the largest pile is always of size $nq_n+O(\sqrt{n})=n( q_n+o(1))$ and that after an additional $2\sqrt{n}$ moves all non-empty piles will be stemming from piles of that size. At this point, let $\alpha_i$ denote the size of the pile that was created $i$ moves ago ($i=1,2,\dotsc$) and has since been decreased $i-1$ times. Thus $\alpha_i = (1-q_n)^{i-1}n(q_n+o(1)) - O(i)$ where the latter term is the contribution from rounding downward in each move.

Let $x=x(i)$, $x \ge 0$, be the $x$-coordinate of the column corresponding to $\alpha_i$ in the composition diagram of $\alpha$, and $y$ the height of this column, after downscaling:
\begin{equation}
\label{eq:xy}
x=(i-1)(q_n+o(1)), \quad
y=\frac{(1-q_n)^{i-1}n(q_n+o(1)) - O(i)}{n(q_n+o(1))}.
\end{equation}
Then $i=1+\frac{x}{q_n+o(1)}$ and
\begin{align*}
y(x) &= \frac{(1-q_n)^{x/(q_n+o(1))}n(q_n+o(1))-O( \frac{x}{q_n+o(1)} )}{n(q_n+o(1))} \\
&=(1-q_n)^{x/(q_n+o(1))} - O\left(\frac{x}{n(q_n+o(1))^2} \right) \\
&=(1-q_n)^{x/(q_n+o(1))} - o(1),
\end{align*}
where we used $q_n^2 n \rightarrow \infty$ as $n \rightarrow \infty$ in the last step.
%and $0<q_n\le 1$ we must also have $q_n \rightarrow 0$. Thus, as $n\rightarrow\infty$ the last term in the numerator vanishes.
Since ${(1- q_n)^{x/q_n} \rightarrow e^{-x}}$, we therefore have $y(x)\rightarrow e^{-x}$. 
Recall that $q_n \rightarrow 0$ as $n\to\infty$. Thus, by \eqref{eq:xy}, for a fixed $i$ we have $x=x(i)\to 0$ as $n\to\infty$. In other words, the
width of each column in the composition diagram for $\alpha$ (after downscaling) is $q_n+o(1) \rightarrow 0$.
This means that the limit $y(x)\to e^{-x}$ holds for any $x \ge 0$, which, by the definition \eqref{eq:deterministic_limitshape_def}, establishes the limit shape for 
the $q_n$-proportion Bulgarian solitaire.

%Since this conclusion holds for any $i\ge 0$, and the width of each column in the composition diagram for $\alpha$ (after downscaling) is $q_n+o(1) \rightarrow 0$, this limit holds for any $x \ge 0$. Thus, the limiting shape of the weak composition $\alpha$ is $y=e^{-x}$.

Finally, thanks to the abovementioned result from \cite{EJS2}, the same limit shape is obtained when the piles of the weak compositions are reordered to form partitions.

\section{Discussion}
In this paper we have discussed limit shape results for stable and recurrent configurations. Popov \cite{Popov} studied the limit shape of the configurations drawn from the stationary distribution of a random version of Bulgarian solitaire, in which a card is picked from a pile only with probability $p$ (and independently of other piles). He found that also this random version yields a triangular limit shape, in the sense that the probability of deviations larger than some $\varepsilon>0$ tends to zero as $n$ tends to infinity. In a companion paper \cite{EJS2} to the present paper we 
study 
%address the question under which conditions 
a random version of $q_n$-proportion Bulgarian solitaire 
and the conditions under which it has an exponential limit shape.

%\bibliographystyle{amsplain}
%\bibliography{limitshape_local}	% expects file "limitshape_local.bib"

%\providecommand{\bysame}{\leavevmode\hbox to3em{\hrulefill}\thinspace}
%\providecommand{\MR}{\relax\ifhmode\unskip\space\fi MR }

% \MRhref is called by the amsart/book/proc definition of \MR.
%\providecommand{\MRhref}[2]{%
%	\href{http://www.ams.org/mathscinet-getitem?mr=#1}{#2}
%}

%\providecommand{\href}[2]{#2}

\end{document}